\numberwithin{equation}{section}
\newcommand{\remove}[1]{}
\begin{document}
\newtheorem{theorem}{Theorem}[section]
\newtheorem{lemma}[theorem]{Lemma}
\newtheorem{sublemma}[theorem]{Sub-lemma}
\newtheorem{definition}[theorem]{Definition}
\newtheorem{conjecture}[theorem]{Conjecture}
\newtheorem{proposition}[theorem]{Proposition}
\newtheorem{claim}[theorem]{Claim}
\newtheorem{algorithm}[theorem]{Algorithm}
\newtheorem{corollary}[theorem]{Corollary}
\newtheorem{observation}[theorem]{Observation}
\newtheorem{problem}[theorem]{Open Problem}
\newcommand{\R}{{\mathbb R}}
\newcommand{\N}{{\mathbb N}}
\newcommand{\Z}{{\mathbb Z}}
\newcommand\eps{\varepsilon}
\newcommand{\E}{\mathbb E}
\newcommand{\Prob}{\mathbb{P}}
\newcommand{\pl}{\textrm{C}}
\newcommand{\dang}{\textrm{dang}}
\renewcommand{\labelenumi}{(\roman{enumi})}
\newcommand{\bc}{\bar c}
\newcommand{\cal}[1]{\mathcal{#1}}
\newcommand{\G}{{\cal G}}
\newcommand{\Hc}{{\cal H}}
\newcommand{\Gnd}{\G_{n,d}}
\newcommand{\Gnp}{\G(n,p)}
\renewcommand{\P}{{\cal P}}
\newcommand{\la}{\lambda}
\newcommand{\floor}[1]{\lfloor #1 \rfloor}

\newcommand{\bel}[1]{\be\label{#1}}
\newcommand{\ee}{\end{equation}}
\newcommand{\be}{\begin{equation}}
 \newcommand\eqn[1]{(\ref{#1})}
 \newcommand{\ex}{\E}

\newcommand{\aas}{{a.a.s.}}
\newcommand{\wO}{\widetilde O}
\newcommand{\accessconst}{\gammaconst}
\newcommand{\gammaconst}{9}

\newcommand{\Aconst}{a} 
\newcommand{\Bconst}{b} 
\newcommand{\hatU}{\widehat U}
\newcommand{\Bin}{{\rm Bin}}
\newcommand{\tildeU}{{\widetilde U}}

\newcommand{\norm}[1]{\left\lVert#1\right\rVert}

\newcommand{\scr}{\mathcal}
\newcommand{\mb}{\mathbb}
\newcommand{\til}{\widetilde}

\newcommand{\mil}{\mathit}

\newcommand{\opt}{\textup{opt}}
\newcommand{\lmax}{\textup{lmax}}
\newcommand{\pred}{\textup{pred}}

\newcommand{\Avg}{\text{Avg}}

\newcommand{\schur}{\textup{schur}}

\newcommand{\Enc}{\textup{Enc}}
\newcommand{\Dec}{\textup{Dec}}
\newcommand{\mPi}{\mathit{\Pi}}
\newcommand{\gam}{\gamma}
\newcommand{\Gam}{\Gamma}

\newcommand{\disc}{\operatorname{disc}}

\title{Tight Bounds on Probabilistic Zero Forcing on Hypercubes and Grids} 

\author{Natalie C. Behague}
\address{Department of Mathematics, Ryerson University, Toronto, ON, Canada}
\email{\texttt{nbehague@ryerson.ca}}

\author{Trent Marbach}
\address{Department of Mathematics, Ryerson University, Toronto, ON, Canada}
\email{\tt trent.marbach@ryerson.ca}

\author{Pawe\l{} Pra\l{}at}
\address{Department of Mathematics, Ryerson University, Toronto, ON, Canada}
\email{\texttt{pralat@ryerson.ca}
}

\keywords{zero forcing, probabilistic zero forcing, hypercubes, grids}

\maketitle

\begin{abstract}
Zero forcing is a deterministic iterative graph colouring process in which vertices are coloured either blue or white, and in every round, any blue vertices that have a single white neighbour force these white vertices to become blue. Here we study probabilistic zero forcing, where blue vertices have a non-zero probability of forcing each white neighbour to become blue. We explore the propagation time for probabilistic zero forcing on hypercubes and grids. 
\end{abstract}

\section{Introduction\label{intro}}

\subsection{Definition of Zero Forcing}

Zero forcing is an iterative graph colouring procedure which can model certain real-world propagation and search processes such as rumor spreading. Given a graph $G$ and a set of marked, or blue, vertices $Z\subseteq G$, the process of zero forcing involves the application of the \emph{zero forcing colour change rule} in which a blue vertex $u$ forces a non-blue (white) vertex $v$ to become blue if $N(u)\setminus Z=\{v\}$, that is, $u$ forces $v$ to become blue if $v$ is the only white neighbour of $u$.

We say that $Z$ is a \emph{zero forcing set} if when starting with $Z$ as the set of initially blue vertices, after iteratively applying the zero forcing colour change rule until no more vertices can be forced blue, the entire vertex set of $G$ becomes blue. Note that the order in which forces happen is arbitrary since if $u$ is in a position in which it can force $v$, this property will not be destroyed if other vertices are turned blue. As a result, we may process vertices sequentially (in any order) or all vertices that are ready to turn blue can do so simultaneously. The \emph{zero forcing number}, denoted $z(G)$, is the cardinality of the smallest zero forcing set of $G$.

\medskip

Zero forcing has sparked a lot of interest recently. Some work has been done on calculating or bounding the zero forcing number for specific structures such as graph products~\cite{AIM2008}, graphs with large girth~\cite{DK2015} and random graphs~\cite{BBEMP2018,KKB2019}, while others have studied variants of zero forcing such as connected zero forcing~\cite{BH2017} or positive semi-definite zero forcing~\cite{BBFHHSVV2010}.

While zero forcing is a relatively new concept (first introduced in~\cite{AIM2008}), the problem has many applications to other branches of mathematics and physics. For example, zero forcing can give insight into linear and quantum controllability for systems that stem from networks. More precisely, in~\cite{BDHSY2013}, it was shown that for both classical and quantum control systems that can be modelled by networks, the existence of certain zero forcing sets guarantees controllability of the system.

Another area closely related to zero forcing is power domination~\cite{ZKC2006}. Designed to model the situation where an electric company needs to continually monitor their power network, one method that is used is to place phase measurement units (PMUs) periodically through their network. To reduce the cost associated with this, one asks for the least number of PMUs necessary to observe a specific network fully. To be more specific, given a network modelled with a simple graph $G$, a PMU placed at a vertex will be able to observe every adjacent vertex. Furthermore, if an observed vertex has exactly one unobserved neighbour, this observed vertex can observe this neighbour. In this way, power domination involves an observation rule compatible with the zero forcing colour change rule.

\medskip

In the present paper we are concerned with a parameter associated with zero forcing known as the \emph{propagation time}, which is the fewest number of rounds necessary for a zero forcing set of size $z(G)$ to turn the entire graph blue. More formally, given a graph $G$ and a zero forcing set $Z$, we generate a finite sequence of sets $Z_0\subsetneq Z_1\subsetneq \dots\subsetneq Z_t$, where $Z_0=Z$, $Z_t=V(G)$, and given $Z_i$, we define $Z_{i+1}=Z_i\cup Y_i$, where $Y_i\subseteq V(G)\setminus Z_i$ is the set of white vertices that can be forced in the next round if $Z_i$ is the set of the blue vertices. Then the propagation time of $Z$, denoted $pt(G,Z)$, is defined to be $t$. The propagation time of the graph $G$ is then given by $pt(G)=\min_{Z} pt(G,Z)$, where the minimum is taken over all zero forcing sets $Z$ of cardinality $z(G)$. The propagation time for zero forcing has been studied in~\cite{HHKMWY2012}.

\subsection{Definition of Probabilistic Zero Forcing}

Zero forcing was initially formulated to bound a problem in linear algebra known as the min-rank problem~\cite{AIM2008}. In addition to this application to mathematics, zero forcing also models many real-world propagation processes. One specific application of zero forcing could be to rumor spreading, but the deterministic nature of zero forcing may not fit the chaotic nature of real-life situations. As such, probabilistic zero forcing has also been proposed and studied where blue vertices have a non-zero probability of forcing white neighbours, even if there is more than one white neighbour. More specifically, given a graph $G$, a set of blue vertices $Z$, and vertices $u\in Z$ and $v\in V(G)\setminus Z$ such that $uv\in E(G)$, in a given time step, vertex $u$ will force vertex $v$ to become blue with probability
\[
\mb{P}(u\text{ forces }v)=\frac{|N[u]\cap Z|}{\deg(u)},
\]
where $N[u]$ is the closed neighbourhood of $u$.

In a given round, each blue vertex will attempt to force each white neighbour independently. If this happens, we may say that the edge $uv$ is forced. A vertex becomes blue as long as it is forced by at least one blue neighbour, or in other words if at least one edge incident with it is forced. Note that if $v$ is the only white neighbour of $u$, then with probability $1$, $u$ forces $v$, so given an initial set of blue vertices, the set of vertices forced via probabilistic zero forcing is always a superset of the set of vertices forced by traditional zero forcing. In this sense, probabilistic zero forcing and traditional zero forcing can be coupled. In the context of rumor spreading, the probabilistic colour change rule captures the idea that someone is more likely to spread a rumor if many of their friends have already heard the rumor.

Under probabilistic zero forcing, given a connected graph, it is clear that starting with any non-empty subset of blue vertices will with probability 1 eventually turn the entire graph blue, so the zero forcing number of a graph is not an interesting parameter to study for probabilistic zero forcing. Initially in~\cite{KY2013}, the authors studied a parameter that quantifies how likely it is for a subset of vertices to become a traditional zero forcing set the first time-step that it theoretically could under probabilistic zero forcing.

Instead, in this paper, we will be concerned with a parameter that generalizes the zero forcing propagation time. This generalization was first introduced in~\cite{GH2018}. Given a graph $G$, and a set $Z\subseteq V(G)$, let $pt_{pzf}(G,Z)$ be the random variable that outputs the propagation time when probabilistic zero forcing is run with the initial blue set $Z$. For ease of notation, we will write $pt_{pzf}(G,v)=pt_{pzf}(G,\{v\})$. The propagation time for the graph $G$ will be defined as the random variable $pt_{pzf}(G)=\min_{v\in V(G)}pt_{pzf}(G,v)$. More specifically, $pt_{pzf}(G)$ is a random variable for the experiment in which $n$ iterations of probabilistic zero forcing are performed independently, one for each vertex of $G$, then the minimum is taken over the propagation times for these $n$ independent iterations.

\subsection{Asymptotic Notation}

Our results are asymptotic in nature, that is, we will assume that $n\to\infty$. Formally, we consider a sequence of graphs $G_n=(V_n,E_n)$ (for example, $G_n$ is an $n$-dimensional hypercube or $n$ by $n$ grid) and we are interested in events that hold \emph{asymptotically almost surely} (\emph{a.a.s.}), that is, events that hold with probability tending to 1 as $n\to \infty$.

Given two functions $f=f(n)$ and $g=g(n)$, we will write $f(n)=O(g(n))$ if there exists an absolute constant $c \in \R_+$ such that $|f(n)| \leq c|g(n)|$ for all $n$, $f(n)=\Omega(g(n))$ if $g(n)=O(f(n))$, $f(n)=\Theta(g(n))$ if $f(n)=O(g(n))$ and $f(n)=\Omega(g(n))$, and we write $f(n)=o(g(n))$ or $f(n) \ll g(n)$ if $\lim_{n\to\infty} f(n)/g(n)=0$. In addition, we write $f(n) \gg g(n)$ if $g(n)=o(f(n))$ and we write $f(n) \sim g(n)$ if $f(n)=(1+o(1))g(n)$, that is, $\lim_{n\to\infty} f(n)/g(n)=1$.

Finally, as typical in the field of random graphs, for expressions that clearly have to be an integer, we round up or down but do not specify which: the choice of which does not affect the argument.

\subsection{Results on Probabilistic Zero Forcing}

In~\cite{GH2018}, the authors studied probabilistic zero forcing, and more specifically the expected propagation time for many specific structures. A summary of this work is provided in the following theorem.  
\begin{theorem}[\cite{GH2018}]
Let $n>2$. Then
\begin{itemize}
    \item $\min_{v\in V(P_n)}\E(pt_{pzf}(P_n,v))=\begin{cases}n/2+2/3&\text{ if }n\text{ is even}\\
    n/2+1/2&\text{ if }n\text{ is odd},
    \end{cases}$
    \item$\min_{v\in V(C_n)}\E(pt_{pzf}(C_n,v))=\begin{cases}n/2+1/3&\text{ if }n\text{ is even}\\
    n/2+1/2&\text{ if }n\text{ is odd},
    \end{cases}$
    \item$\min_{v\in V(K_{1,n})}\E(pt_{pzf}(K_{1,n},v))=\Theta(\log n)$,
    \item$\Omega(\log\log n)=\min_{v\in V(K_n)}\E(pt_{pzf}(K_n,v))=O(\log n)$.
\end{itemize}
\end{theorem}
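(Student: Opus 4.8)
Since $K_n$ is vertex-transitive, every choice of initial vertex yields the same distribution, so $\min_{v}\E(pt_{pzf}(K_n,v))=\E(pt_{pzf}(K_n,v_0))$ for an arbitrary fixed $v_0$, and it suffices to analyse one run started from a single blue vertex. The plan is to track $X_t$, the number of blue vertices after $t$ rounds, with $X_0=1$. In $K_n$ we have $N[u]=V(K_n)$ for every $u$, so when $k=X_t$ vertices are blue each blue vertex forces each white vertex independently with probability $k/(n-1)$; hence a fixed white vertex survives with probability $(1-\tfrac{k}{n-1})^{k}$, and, crucially, distinct white vertices are forced independently. Therefore, conditional on $X_t=k$,
\[
X_{t+1}-X_t \;\sim\; \Bin\!\left(n-k,\; p_k\right), \qquad p_k = 1-\Big(1-\tfrac{k}{n-1}\Big)^{k}.
\]
The elementary bounds $\tfrac{k^2}{2(n-1)}\le p_k \le \tfrac{k^2}{n-1}$ (valid while $k^2\le n-1$) drive everything: the expected one-round increment has order $k^2$, so the blue set should roughly square each round, which already suggests the truth is $\Theta(\log\log n)$.

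For the lower bound $\Omega(\log\log n)$ I would use only the upper estimate $p_k\le k^2/(n-1)$, which gives $\E[X_{t+1}-X_t\mid X_t]\le X_t^2$. Using Chernoff bounds on the binomial increment — multiplicatively when its mean exceeds $\log n$ and additively below that — I would show that with probability $1-o(1)$, simultaneously over all rounds, $X_{t+1}\le 3\max(X_t^2,\log n)$; the per-round failure probability is $n^{-\Omega(1)}$, so the union over the $O(\log\log n)$ relevant rounds is $o(1)$. Iterating from $X_0=1$ gives $X_t \le (3\log n)^{2^{t}}$ in the squaring regime, so reaching $X_t=n$ (indeed even $X_t\ge\sqrt n$) forces $2^{t}\ge (1-o(1))\log n/\log\log n$, i.e. $t\ge (1-o(1))\log_2\log n$. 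Since $pt_{pzf}\ge 0$ always, conditioning on this $(1-o(1))$-probability event gives $\E(pt_{pzf}(K_n,v_0))\ge(1-o(1))\log_2\log n$.

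For the upper bound $O(\log n)$ I would run the process through three phases. First, escape from $k=1$: one round produces a blue neighbour with probability $1-(1-\tfrac{1}{n-1})^{n-1}\to 1-e^{-1}$, so the number of rounds to reach $k\ge 2$ is dominated by a geometric variable, hence $O(1)$ in expectation and $O(\log n)$ with probability $1-n^{-\Omega(1)}$. Second, the growth phase $2\le k\le n/2$: here I would prove the robust statement $\Prob[X_{t+1}\ge 2X_t\mid X_t=k]\ge c$ for an absolute constant $c>0$. For $k\le\sqrt n$ the increment has mean of order $k^2$ and a binomial exceeds a constant fraction of its mean with probability bounded below; for larger $k$ one has $p_k\ge 1-e^{-1}$, so a constant fraction of the $n-k$ white vertices turn blue, which again at least doubles the count. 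Counting ``successful'' rounds with a Chernoff bound, $O(\log n)$ rounds suffice to pass from $k=2$ to $k\ge n/2$. Third, the mop-up $k\ge n/2$: then $p_k\ge 1-2^{-k}$ is essentially $1$, so a single further round turns every remaining white vertex blue with overwhelming probability. Chaining the phases gives $pt_{pzf}(K_n,v_0)=O(\log n)$ with high probability, and a negative-binomial tail bound on the number of productive rounds converts this into $\E(pt_{pzf}(K_n,v_0))=O(\log n)$.

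The main obstacle is precisely the gap between the two bounds. The lower bound is genuinely $\log\log n$ because the one-round increment can at most square the blue set, and this is easy to certify from above. On the upper side, the natural robust argument only certifies constant-probability \emph{doubling} per round, which costs $\Theta(\log n)$ rounds over the $\Theta(\log n)$ doublings needed; upgrading it to the true \emph{squaring} behaviour would require two-sided concentration showing $X_{t+1}=(1+o(1))X_t^2$ throughout $2\le k\le\sqrt n$, where for small $k$ the binomial increment is far from concentrated. Controlling these low-count fluctuations — rather than any single calculation — is what forces the weaker $O(\log n)$ upper bound here.
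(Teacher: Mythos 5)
Your proposal addresses only the last of the theorem's four bullets. The statement is a package of four results quoted from~\cite{GH2018}: exact expected propagation times for paths ($n/2+2/3$ or $n/2+1/2$) and cycles ($n/2+1/3$ or $n/2+1/2$), the bound $\Theta(\log n)$ for the star $K_{1,n}$, and the two-sided bound for $K_n$. Your argument touches none of the first three, and none of them follows from the $K_n$ analysis: the path and cycle items require \emph{exact} (not asymptotic) computation --- after the initial stochastic delay at the starting vertex, a blue frontier vertex on a path or cycle has all-but-one neighbour blue and so advances deterministically, and the additive constants $2/3$, $1/2$, $1/3$ come from computing the expectation of that initial delay exactly for the optimal starting vertex; the star needs its own (coupon-collector style) argument in which the centre forces each white leaf with probability $k/n$ when $k$ vertices are blue. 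So, as a proof of the stated theorem, the proposal has a genuine gap: three of the four claims are simply unproved. Note also that the paper you were given never proves this theorem --- it is cited as background --- so there is no internal proof to compare against; the comparison can only be against the statement itself, which your proposal does not fully establish.

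On the part you do treat, the sketch for $K_n$ is essentially sound. The increment structure (conditional on $X_t=k$, $X_{t+1}-X_t\sim\Bin\left(n-k,\,1-(1-\tfrac{k}{n-1})^{k}\right)$, with distinct white vertices forced independently because they see disjoint edge sets) is correct; the lower bound via ``the blue set can at most square each round, up to constants and a $\log n$ floor'' gives $\Omega(\log\log n)$ as claimed; and constant-probability doubling plus a negative-binomial tail yields $O(\log n)$ in expectation. One detail needs repair: near the top of your growth phase, say $k\in[0.4n,\,n/2]$, the justification ``a constant fraction of the $n-k$ white vertices turn blue, which again at least doubles the count'' fails arithmetically, since a constant fraction of $n-k\le 0.6n$ need not reach $k\approx n/2$. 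What saves the claim there is that for such $k$ one has $p_k\ge 1-e^{-\Omega(n)}$, so in fact \emph{all} white vertices turn blue in a single round with probability $1-o(1)$; equivalently, you can end the doubling phase at $k\ge n/4$ and start the mop-up phase there. This is a fixable slip, not a structural flaw --- the structural flaw is the three missing bullets.
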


In~\cite{CCGHLOR2019}, the authors used tools developed for Markov chains to analyze the expected propagation time for many small graphs. The authors also showed, in addition to other things, that $\min_{v\in V(K_n)}\E(pt_{pzf}(K_n,v))=\Theta(\log\log n)$ and for any connected graph $G$, $\min_{v\in V(G)}\E(pt_{pzf}(G,v))=O(n)$. This result was then improved in~\cite{NS2019}, where the authors showed that 
\[
\log_2\log_2(n)\leq \min_{v\in V(G)}\E(pt_{pzf}(G,v))\leq \frac{n}2+o(n)
\] 
for general connected graphs $G$. In the same paper, the authors also showed that
\begin{equation}\label{eqn:radius_theorem}
    \min_{v \in V(G)} \E( pt_{pzf}(G,v)) = O(r \log(n/r)),
\end{equation}
where $r\ge 1$ denotes the radius of the connected graph $G$. Moreover, they provided a class of graphs for which the bounds of the theorem are tight. 

\medskip

In addition to the results mentioned above, the authors of~\cite{GH2018} also considered the \emph{binomial random graph} $\Gnp$, proving the following theorem. 

\begin{theorem}[\cite{GH2018}]\label{theorem ept random graph}
Let $0<p<1$ be constant. Then \aas~we have that
\[
\min_{v\in V(\Gnp)}\E(pt_{pzf}(\Gnp,v))=O((\log n)^2).
\]
\end{theorem}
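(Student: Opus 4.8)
The plan is to condition on a set of structural properties of $\Gnp$ that hold \aas, and then to bound, for an \emph{arbitrary fixed} vertex $v_0$, the expectation (over the forcing randomness, with $G$ fixed) of $pt_{pzf}(G,v_0)$; since $\min_{v}\E(pt_{pzf}(G,v))$ is at most this quantity, it suffices to analyse a single run from $v_0$, tracking only the size $b_t=|Z_t|$ of the blue set. The properties I would fix are: every degree lies in $(1\pm\eps)pn$; and, by a Chernoff bound together with a union bound over all vertex subsets, that every $S\subseteq V$ with $|S|\le n/2$ sends at least $\tfrac{p}{2}|S|\cdot\tfrac{n}{2}$ edges to its complement. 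The crucial probabilistic observation is that, for a fixed blue set $Z$, the events ``$w$ becomes blue this round'' for distinct white vertices $w$ depend on disjoint sets of edge-forcings and are therefore mutually independent; hence the number of newly blue vertices in a round is a sum of independent indicators and concentrates around its mean by Chernoff. I would split the run into three regimes according to $b_t$: the small regime $1\le b_t<\log n$, the bulk regime $\log n\le b_t\le n/2$, and the mop-up regime $b_t>n/2$.

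The small regime is the main obstacle and the reason the bound is not better than logarithmic: when $b_t=o(\log n)$ the number of blue neighbours of a vertex is not concentrated, and the blue set is itself correlated with the graph, so no clean ``typical density'' statement is available. I would sidestep this by using only the starting vertex $v_0$. While $b_t<\log n$, the vertex $v_0$ has at least $pn-\log n$ white neighbours and $|N[v_0]\cap Z_t|\ge1$, so it forces each of them with probability at least $1/\deg(v_0)\ge\tfrac{1}{(1+\eps)pn}$; thus $v_0$ alone forces at least one new vertex with probability at least $1-\left(1-\tfrac{1}{(1+\eps)pn}\right)^{pn-\log n}\ge c_0$ for an absolute constant $c_0>0$. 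The number of rounds in which at least one vertex is added therefore stochastically dominates a $\Bin(T,c_0)$ variable, so after $T=O(\log n)$ rounds we have $b_t\ge\log n$ with probability at least $1-n^{-\Omega(1)}$.

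In the bulk regime I would use the crude bound $|N[u]\cap Z_t|\ge1$ for every blue $u$, so each blue--white edge is forced with probability at least $\tfrac{1}{(1+\eps)pn}$. Summing these probabilities over white vertices and invoking the edge-expansion property shows the expected number of newly blue vertices is at least $\tfrac{1}{(1+\eps)pn}\cdot\tfrac{p}{2}b_t\cdot\tfrac{n}{2}=\Omega(b_t)$; since $b_t\ge\log n$ this mean exceeds $\Omega(\log n)$, and the independence/Chernoff observation gives $b_{t+1}\ge(1+c_1)b_t$ with probability at least $1-n^{-\Omega(1)}$ for a constant $c_1>0$. Hence $b_t$ grows by a constant factor each round and reaches $n/2$ within $O(\log n)$ rounds, all of which succeed after a union bound over these rounds. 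The symmetric computation on $W_t=V\setminus Z_t$ handles the mop-up regime: the same expansion bound gives $\E|W_{t+1}|\le(1-c_1)|W_t|$, so $|W_t|$ shrinks geometrically below $\log n$ in $O(\log n)$ rounds; once $|W_t|<\log n$, every remaining white vertex has $\Omega(pn)$ blue neighbours, each forcing it with probability bounded away from $0$, so all of them turn blue in one further round except with probability $e^{-\Omega(n)}$.

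Assembling the three regimes shows that a single run from $v_0$ finishes within $O(\log n)$ rounds with probability at least $1-n^{-\Omega(1)}$, comfortably inside the claimed $O((\log n)^2)$ bound. To pass from this high-probability statement to the bound on the expectation I would use a restart argument: conditioning (via the Markov property on blue sets) on the blue set at the end of a length-$\Theta(\log n)$ block, the same three-regime analysis applies afresh to the next block, so the process fails to finish within $k$ blocks with probability at most $n^{-\Omega(k)}$; summing the geometric tail gives $\E(pt_{pzf}(\Gnp,v_0))=O(\log n)$ on the good event, and therefore $\min_{v}\E(pt_{pzf}(\Gnp,v))=O(\log n)=O((\log n)^2)$ \aas. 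The genuine difficulty throughout is the small regime: one must avoid any appeal to concentration while $b_t$ is tiny and must cope with the correlation between the evolving blue set and the random graph, which is exactly why restricting attention to the fixed starting vertex is the cleanest device.
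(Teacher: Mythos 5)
This theorem is not actually proved in the paper: it is quoted verbatim from~\cite{GH2018}, so there is no internal proof to compare your argument against. Judged on its own merits, your proof is essentially correct, and it in fact establishes the stronger bound $\min_{v}\E(pt_{pzf}(\Gnp,v))=O(\log n)$ --- which is consistent with the paper's remark that \eqref{eqn:radius_theorem}, combined with the fact that the radius of $\Gnp$ is \aas\ equal to $2$, already improves the quoted $O((\log n)^2)$ to $O(\log n)$, and with the sharper $\Theta(\log\log n)$ result of~\cite{EMP2021}. The architecture is sound: you first fix deterministic properties of the graph that hold \aas\ (degree concentration, plus an expansion bound valid for \emph{all} sets of size at most $n/2$, proved by Chernoff and a union bound over subsets --- this is exactly what neutralizes the correlation between the evolving blue set and the graph, since the property is universal and no longer random); you correctly observe that the events ``$w$ turns blue this round'' are mutually independent over white vertices $w$, being unions of disjoint families of independent edge-forcings, so round-by-round Chernoff concentration is legitimate; and you correctly recognize that the high-probability $O(\log n)$ bound does not by itself bound the expectation, supplying the restart/geometric-tail argument (valid because your three-regime analysis applies uniformly to \emph{any} blue set containing $v_0$, and the blue-set process is Markov given the graph). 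This is close in spirit to the phase-and-Chernoff analyses the present paper uses for the hypercube and that~\cite{EMP2021} uses for $\Gnp$, rather than to whatever route~\cite{GH2018} took to the weaker $(\log n)^2$ bound.

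One step deserves a repair, though it only costs a constant factor. In the bulk and mop-up regimes you identify the expected number of newly forced vertices with the sum of edge-forcing probabilities over blue--white edges; this overcounts, because a white vertex forced along several edges contributes only one new blue vertex. The fix is standard: for a white vertex $w$ with $\deg_Z(w)$ blue neighbours, one has $\Prob(w \text{ turns blue}) \ge 1-\bigl(1-\tfrac{1}{(1+\eps)pn}\bigr)^{\deg_Z(w)} \ge (1-e^{-1})\,\tfrac{\deg_Z(w)}{(1+\eps)pn}$, where the last inequality uses that $\deg_Z(w)/((1+\eps)pn)\le \deg(w)/((1+\eps)pn) \le 1$; summing over $w$ recovers your $\Omega(b_t)$ (respectively, the $\E|W_{t+1}|\le(1-c_1)|W_t|$ contraction) up to the harmless factor $1-e^{-1}$. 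A second, purely cosmetic point: the number of white neighbours of $v_0$ in the small regime should be $(1-\eps)pn-\log n$ rather than $pn-\log n$; the constant $c_0$ survives unchanged.
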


\noindent However, the authors in \cite{NS2019} conjectured that for the random graph, a.a.s.\ 
$$
\min_{v\in V(\Gnp)}\E(pt_{pzf}(\Gnp,v))=(1+o(1))\log\log n. 
$$
Of course, Theorem~\ref{theorem ept random graph} can be improved immediately via~\eqref{eqn:radius_theorem} and the fact that for $0<p<1$ constant, the radius of $\Gnp$ is a.a.s. $2$ (see e.g.~\cite{FK2016}), but even with this improvement, the bound is still far from the conjectured value. In~\cite{EMP2021}, the authors explored probabilistic zero forcing on $\Gnp$ in more detail and, in particular, proved the above conjecture. Their results can be summarized in the following theorem that shows that probabilistic zero forcing occurs much faster in $\Gnp$ than in a general graph $G$, as evidenced by the bounds of~\eqref{eqn:radius_theorem}.

\begin{theorem}[\cite{EMP2021}]
Let $v\in V(\Gnp)$ be any vertex of $\Gnp$. \\
If $p = \log^{-o(1)} n$ (in particular, if $p$ is a constant), then a.a.s.\ 
$$pt_{pzf}(\Gnp,v) \sim \log_2 \log_2 n.$$
On the other hand, if $\log n / n \ll p = \log^{- \Omega(1)} n$, then a.a.s. 
$$pt_{pzf}(\Gnp,v) = \Theta( \log (1/p) ).$$
\end{theorem}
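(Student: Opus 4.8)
The plan is to track the size $k_t = |Z_t|$ of the blue set after round $t$ of the process started at $v$, and to show that, \aas, it follows a deterministic recursion up to $(1+o(1))$ factors. The engine is the following heuristic. Since $\log n / n \ll p$, we have $np \gg \log n$, so \aas~every vertex of $\Gnp$ has degree $(1+o(1))np$ and every pair of vertices has its expected codegree; I would condition on this event throughout. If $Z$ is the current blue set of size $k$, a blue vertex $u$ has $|N[u]\cap Z| = (1+o(1))(1 + kp)$ and $\deg(u) = (1+o(1))np$, so it forces a fixed white neighbour with probability $(1+o(1))(1+kp)/(np)$. A white vertex $w$ has $(1+o(1))kp$ blue neighbours, each forcing it independently, so
\[
\Prob(w \text{ becomes blue}) = 1 - \left(1 - (1+o(1))\frac{1+kp}{np}\right)^{(1+o(1))kp} = 1 - \exp\left(-(1+o(1))\frac{k(1+kp)}{n}\right).
\]
Summing over the $(1-o(1))n$ white vertices gives $\E(k_{t+1}\mid k_t=k) = (1+o(1))(2k + pk^2)$ as long as $k(1+kp)/n = o(1)$, and $\E(k_{t+1}) = (1-o(1))n$ once $k \gg \sqrt{n/p}$.

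This identifies three phases. In the \emph{doubling phase} ($pk \ll 1$) the recursion is $k_{t+1}\approx 2k_t$, so climbing from $O(1)$ to $\Theta(1/p)$ takes $(1+o(1))\log_2(1/p) + O(1)$ rounds. In the \emph{squaring phase} ($1/p \ll k \ll \sqrt{n/p}$) the recursion is $k_{t+1}\approx p k_t^2$; writing $m_t = pk_t$ this is $m_{t+1}\approx m_t^2$, which grows doubly exponentially and reaches $m \approx \sqrt{np}$ in $(1+o(1))\log_2\log_2(np)$ rounds. Finally, in the \emph{completion phase} ($k \gtrsim \sqrt{n/p}$) a constant fraction of the remaining white vertices turns blue each round and, since $np \gg \log n$, the whole graph becomes blue within $O(1)$ further rounds \aas. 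Adding the contributions gives a total of $(1+o(1))\log_2(1/p) + (1+o(1))\log_2\log_2(np) + O(1)$ rounds, and a short computation shows this is $(1+o(1))\log_2\log_2 n$ when $p = \log^{-o(1)}n$ (the doubling term is then $o(\log\log n)$ and $\log_2\log_2(np)\sim \log_2\log_2 n$), and $\Theta(\log(1/p))$ when $\log n / n \ll p = \log^{-\Omega(1)}n$ (the doubling term is $\Theta(\log(1/p))$, while $\log_2\log_2(np) = O(\log\log n) = O(\log(1/p))$).

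To turn the heuristic into an \aas~statement I would prove matching bounds on $k_t$. For the upper bound on the propagation time (growth is fast enough), I would expose the edges from the current blue set round by round and apply a Chernoff bound to the number of newly forced vertices, which is a sum of nearly independent indicators with total mean $\gg \log n$ once $k$ is at least polylogarithmic; this yields $k_{t+1}\ge (1-o(1))(2k_t + pk_t^2)$ with probability $1 - n^{-\omega(1)}$, so a union bound over the $O(\log n)$ rounds keeps the whole trajectory on track. For the lower bound on the propagation time (growth is no faster than squaring), I would bound $\Prob(w\text{ becomes blue})$ above by $(1+o(1))k(1+kp)/n$ via a union bound over $w$'s blue neighbours, then apply the upper Chernoff tail, giving $k_{t+1}\le (1+o(1))(2k_t + pk_t^2)$ throughout and hence the matching lower bound on the number of rounds.

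The main obstacle is the small-$k$ regime, where concentration is too weak to apply directly: this covers both the \emph{ignition} from a single vertex (the number of newly forced vertices is approximately Poisson with mean $1+o(1)$, so the process fluctuates and can briefly stall) and the bottom of the squaring phase, where $m_t = pk_t$ is only just above $1$. For ignition I would show that while $k_t$ is small the expected one-round growth factor is at least $2-o(1)$ and the probability of forcing no new vertex is at most $1/2 + o(1)$ per round, so the probability of stalling for $r = \omega(1)$ consecutive rounds is $o(1)$; choosing $r = o(\log\log n)$ then makes the ignition cost negligible \aas. The weak concentration near the bottom of the squaring phase I would absorb using the robustness of doubly exponential growth: a constant-factor error in $m_t$ perturbs the round count by only $o(1)$ once $m_t$ is large, because $\log_2\log_2$ grows so slowly. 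I expect verifying that the dependence between $Z_t$ and the fixed random graph across rounds does not spoil the $(1\pm o(1))$ concentration — controlled either by a deferred edge-exposure scheme or a bounded-differences martingale argument — to be the most delicate part of the write-up.
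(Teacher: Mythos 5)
This theorem is not proved in the paper at all --- it is quoted from \cite{EMP2021} as background --- so there is no in-paper argument to compare against, and I can only assess your proposal on its own terms. Your three-phase architecture (constant-factor growth while $k_tp \ll 1$, doubly-exponential growth via $m_t = pk_t$, then completion), with concentration plus union bounds over rounds and special care at small $k_t$, is the right general shape for a result of this type. However, the recursion at the heart of your plan is wrong, and the error is conceptual, not technical.

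The problem is that you treat the blue set $Z_t$ as if it were a uniformly random set of size $k_t$, which is what underlies the claims $|N[u]\cap Z_t| = (1+o(1))(1+k_tp)$ and ``a white vertex has $(1+o(1))k_tp$ blue neighbours.'' But $Z_t$ is produced by forcing, so $G[Z_t]$ is connected: every blue vertex other than the initial vertex $v$ is adjacent to the vertex that forced it. Hence $|N[u]\cap Z_t| \ge 2$ for all $u \in Z_t\setminus\{v\}$ and $e(Z_t) \ge k_t - 1$, which in the regime $k_tp \ll 1$ vastly exceeds the $\binom{k_t}{2}p = o(k_t)$ internal edges of a random set. Since the expected number of forcing events in one round is $(1+o(1))\sum_{u\in Z_t}|N[u]\cap Z_t| = (1+o(1))(k_t + 2e(Z_t))$, and the blue set stays tree-like while $k_tp\ll 1$, the true sparse-phase recursion is $k_{t+1} \approx 4k_t$, not $2k_t$: for instance, conditioning on the first round forcing exactly one vertex, $\E[k_2] \approx 6$ rather than your predicted $4$. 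This does not endanger the theorem's conclusions (in regime 1 the sparse phase contributes $o(\log\log n)$ rounds whatever the constant, and regime 2 is only a $\Theta(\cdot)$ statement), but it falsifies the key lemma you propose to prove: the inequality $k_{t+1} \le (1+o(1))(2k_t + pk_t^2)$, on which your entire lower bound for the propagation time rests, is violated by the actual process by a factor growing like $2^t$, so no Chernoff/union-bound argument can establish it. The repair is to prove $k_{t+1} \le (C+o(1))k_t$ for a suitable absolute constant $C$ while $k_tp \ll 1$ (using that $e(Z_t) = O(k_t)$ a.a.s.\ in that regime), and $k_{t+1}\le (1+o(1))pk_t^2$ once $k_tp \gg 1$; note also that while $k_t$ is small this cannot be done by per-round concentration at all and needs an expectation recursion $\E[k_{t+1}] \le C\,\E[k_t]$ combined with Markov's inequality. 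Two smaller symptoms of the same random-set heuristic: conditioning on ``every pair of vertices has its expected codegree'' is vacuous or false when $np^2 = o(\log n)$ (e.g.\ $p = n^{-2/3}$), and a white vertex does not have $(1+o(1))k_tp$ blue neighbours whenever $k_tp = O(\log n)$, which is most of the process. Finally, the dependence between the progressively revealed graph and the per-round forcing coins, which you defer to ``a deferred edge-exposure scheme,'' is genuinely the heaviest part of a rigorous write-up and remains unaddressed, though you do flag it correctly.
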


\medskip

The results of most interest to us are from~\cite{HS2020}. The authors of that paper are concerned with hypercubes $Q_n$ and grids $G_{m\times n}$, and prove the following result. 

\begin{theorem}[\cite{HS2020}]
The following bounds hold:
\begin{itemize}
\item $\min_{v\in V(Q_n)} \E(pt_{pzf}(Q_n,v)) = O(n \log n)$,
\item $(1/2+o(1)) (m+n) \le \min_{v\in V(G_{m\times n})} \E(pt_{pzf}(G_{m\times n},v)) \le (4+o(1)) (m+n)$.
\end{itemize}
\end{theorem}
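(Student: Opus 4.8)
The two grid bounds are the easier of the two and I would dispatch them first. For the lower bound, observe that in a single round the blue set can only grow to include vertices at graph-distance at most $1$ from the current blue set, whatever the forcing probabilities are; hence, with probability $1$, a vertex at distance $d$ from the initial blue vertex $v$ cannot be coloured before round $d$. Thus $pt_{pzf}(G_{m\times n},v)\ge \mathrm{ecc}(v)$ surely, so $\min_v \E(pt_{pzf}(G_{m\times n},v)) \ge \min_v \mathrm{ecc}(v)$, the radius of $G_{m\times n}$. Since $G_{m\times n}$ is the Cartesian product of the paths $P_m$ and $P_n$, its radius is the sum of the two radii, namely $(1/2+o(1))(m+n)$, and the lower bound follows.

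For the grid upper bound I would start at a centre vertex $o$ and compare the process to \emph{first-passage percolation}. The key \emph{monotonicity} observation is that once a vertex $p$ has been coloured it has a blue neighbour (the vertex that forced it, or, for $o$, a neighbour it has already forced), so for every white neighbour $w$ of $p$ one has $|N[p]\cap Z|\ge 2$ while $\deg(p)\le 4$, whence $p$ forces $w$ in each later round with probability at least $1/2$. Fix for each $v$ a monotone shortest path $\pi_v$ from $o$ to $v$ and let $T(v)$ be the round at which $v$ is coloured. Along the edges of $\pi_v$ one has $T(w)\le T(p)+G_{(p,w)}$, where $G_{(p,w)}$ is the waiting time for $p$ to force $w$, stochastically dominated by a $\mathrm{Geom}(1/2)$ variable (the single edge leaving $o$ by a $\mathrm{Geom}(1/4)$ variable). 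Iterating gives $T(v)\le \sum_{e\in\pi_v}G_e$, a sum of $\mathrm{dist}(o,v)$ geometric variables of mean at most $2$ which use disjoint forcing coins and are therefore independent. Standard exponential tail bounds for such sums, with a union bound over the $mn$ vertices, then yield $\E(\max_v T(v))\le 2\max_v\mathrm{dist}(o,v)+O(\log n)=(1+o(1))(m+n)$; as this is comfortably below $(4+o(1))(m+n)$, the claimed bound holds.

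For the hypercube I would argue recursively. Write $Q_n=A\cup B$ with $A,B\cong Q_{n-1}$ the two subcubes obtained by fixing the last coordinate, joined by a perfect matching $M$, and set $f(n)=\min_v\E(pt_{pzf}(Q_n,v))$. Starting at a vertex of $A$, the plan has two phases: (i) colour all of $A$ using only forces internal to $A$; (ii) note that once $A$ is fully blue, each $u\in A$ has its $M$-partner as its \emph{unique} white neighbour and hence forces it deterministically, so $B$ turns entirely blue in one further round. By the coupling monotonicity of probabilistic zero forcing (extra blue vertices never slow the process), phase (i) takes at most as long as the process restricted to $A$ that ignores $B$; this restricted process is exactly probabilistic zero forcing on $Q_{n-1}$, except that, because vertices of $A$ have degree $n$ in $Q_n$ rather than $n-1$, every forcing probability is scaled by $(n-1)/n$. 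Granting that this uniform thinning inflates the expected colouring time by a factor of at most $n/(n-1)$, I obtain $f(n)\le \tfrac{n}{n-1}f(n-1)+1$; dividing by $n$ and telescoping gives $f(n)/n\le f(1)+\sum_{k=2}^{n}1/k=O(\log n)$, that is, $f(n)=O(n\log n)$.

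The main obstacle is precisely the thinning estimate behind the hypercube recursion. Because the factor $n/(n-1)$ is incurred at each of the $n$ levels, the comparison must be genuinely tight: any per-level penalty of the form $1+\Omega(1)$ would inflate the final bound exponentially rather than to $n\log n$, so no constants can be lost here. The natural route is a per-edge coupling realising each thinned success as a $\mathrm{Bern}((n-1)/n)$-thinning of the corresponding unthinned success, followed by an argument that, even with the dependencies created by the rule ``a vertex is coloured as soon as one incident edge fires'', the expected number of rounds grows by at most the reciprocal of the thinning factor. Carrying out this domination, rather than the routine grid estimates, is where I expect the real work to lie.
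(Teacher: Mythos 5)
Your grid arguments are correct, and in fact give more than the quoted statement. The lower bound via the radius is the same trivial observation this paper relies on. Your first-passage-percolation upper bound is also sound: once a vertex is forced it permanently retains a blue neighbour, so every later per-round forcing probability along your fixed path is at least $1/2$ uniformly in the history, and the usual uniform-variable coupling dominates the edge waiting times by i.i.d.\ geometrics (they are not literally independent, as you loosely claim, but the domination is standard). This yields $(1+o(1))(m+n)$, comfortably inside $(4+o(1))(m+n)$, though still a factor of~$2$ away from what this paper obtains in Theorem~\ref{thm:main}(b) via its $d$-window Markov-chain analysis. Note also that the statement you were asked to prove is quoted here from~\cite{HS2020}; the present paper does not reprove it but supersedes it.

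The hypercube half, however, contains a genuine gap, and it is exactly the step you ``grant'': the thinning lemma is not a routine domination, and as a general principle it is \emph{false}. The claim would be that multiplying every forcing probability of a synchronous process by $\alpha<1$ multiplies the expected completion time by at most $1/\alpha$. The obstruction is that thinning destroys simultaneity: a round in which $k$ forces each succeed with probability $1$ becomes, after thinning, $k$ independent $\mathrm{Geom}(\alpha)$ waiting times, and waiting for all of them costs in expectation about $1+k(1-\alpha)$ (and about $\ln k/\ln\frac{1}{1-\alpha}$ when $k(1-\alpha)$ is large), not $1/\alpha\approx 1+(1-\alpha)$. Probabilistic zero forcing has precisely this AND-structure, since $|N[u]\cap Z|/\deg(u)$ equals $1$ only when \emph{all} of $u$'s relevant neighbours are blue; with $\alpha=(n-1)/n$ a single round of $Q_{n-1}$ can involve on the order of $2^{n-1}$ parallel deterministic forces, whose thinned versions need roughly $n/\ln n$ rounds to all succeed, so a naive worst-case coupling loses far more than a factor $n/(n-1)$ per level. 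In actual PZF this is damped because partially blue neighbourhoods still force with positive probability, but quantifying that damping \emph{is} the missing proof, and as you correctly observe the recursion tolerates essentially no loss: a per-level inflation of $1+c/n$ with $c>1$ already telescopes to $O(n^c)$ instead of $O(n\log n)$. So the hypercube half is a plan, not a proof. By contrast, the paper never compares processes across dimensions at all: it analyzes $Q_n$ directly, level by level, with Chernoff and union bounds (a three-phase argument for levels below $n/\ln^2 n$, then the ``happy''/``very happy'' propagation), which sidesteps any inter-process domination and proves the stronger a.a.s.\ bound $pt_{pzf}(Q_n)\sim n$. If your goal is $O(n\log n)$ in expectation, a direct analysis of that kind is the viable route; the subcube recursion as it stands does not close.
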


\subsection{Our Results}

In this paper, we improve both results from~\cite{HS2020}. Instead of considering the expectation of the propagation time, we will calculate bounds on the propagation time that \aas~hold. The lower bounds for both families of graphs are trivial so the improvement is for the upper bounds.

\begin{theorem}\label{thm:main}
The following bounds hold \aas:
\begin{itemize}
\item [(a)] $pt_{pzf}(Q_n) \sim n$,
\item [(b)] $(1+o(1)) (m+n)/2 \le pt_{pzf}(G_{m\times n}) \le (1+10^{-7}) (m+n)/2$.
\end{itemize} 
\end{theorem}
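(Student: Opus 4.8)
These hold deterministically, so I would dispose of them first. For any graph $G$ and vertex $v$, the run of probabilistic zero forcing started at $v$ can only colour a white vertex after one of its neighbours is blue, so the ball of radius $t$ around $v$ contains every blue vertex after round $t$; hence $pt_{pzf}(G,v)\ge \mathrm{ecc}(v)$ with probability $1$, and minimising over $v$ gives $pt_{pzf}(G)\ge \mathrm{rad}(G)$. Since $Q_n$ is vertex-transitive with diameter $n$ we have $\mathrm{rad}(Q_n)=n$, and the centre of $G_{m\times n}$ has eccentricity $(1+o(1))(m+n)/2$, which yields both stated lower bounds. Everything below concerns the upper bounds.

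\textbf{Grid upper bound (b).} I would analyse a single run started at a central vertex $c$ (the minimum over all $mn$ starts can only help) and treat the process as a first-passage percolation: let $T(w)$ be the round at which $w$ is coloured. The plan is to prove that a.a.s. $T(w)\le (1+10^{-7})\,d(c,w)$ simultaneously for all $w$; applied to the farthest corners, at distance $(1+o(1))(m+n)/2$, this gives the bound. The key geometric feature is that each $L^1$-distance layer has only $O(r)$ vertices, and once the layers just behind the front have filled in, a blue frontier vertex typically has a single white neighbour and so forces it with probability $1$; the only loss of speed comes from frontier vertices that momentarily have two white neighbours and force each only with constant probability below $1$. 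I would first set up a pessimistic comparison process that advances the front only through moves succeeding with probability bounded away from $0$, bounding the geometric waiting times, and then invoke concentration for first-passage times (subadditivity together with an Azuma/bounded-differences estimate, since altering one coin flip changes each $T(w)$ by $O(1)$) to show $T(w)$ concentrates about a mean whose time constant is at most $1+10^{-7}$. The explicit, non-tight constant reflects the crudeness of this comparison.

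\textbf{Hypercube upper bound (a).} Here a single run starts slowly, so I would genuinely use that $pt_{pzf}(Q_n)$ is the minimum over $2^n$ independent runs. \emph{Ignition:} by vertex-transitivity it suffices to show that one run reaches a ``critical'' configuration (a blue cluster large and well-spread enough to sustain a self-propagating front) within $o(n)$ rounds with probability $\omega(2^{-n})$; then, by independence of the $2^n$ runs, a.a.s. at least one ignites this fast. \emph{Spread:} conditioned on a critical configuration, I would show that the mostly-blue front advances essentially one Hamming layer per round and, crucially, that white vertices left behind the front drain within $O(1)$ rounds: once both neighbouring layers of such a survivor $w$ have filled in, all $n$ neighbours of $w$ are blue with closed blue-degree $\approx n$, so each forces $w$ with probability $\approx 1$ and $w$ is coloured with probability $1-o(1)$ in the next round. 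This keeps the band of partially-blue layers of width $o(n)$, so the whole cube is blue by round $n+o(n)=(1+o(1))n$, matching the lower bound.

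\textbf{Main obstacle.} The hard part is the hypercube spread phase in the middle layers. A clean layer-by-layer wavefront fails there: if the layers of index $\le t$ are blue and $(t+1)/n=\alpha$, a layer-$(t+1)$ vertex stays white with probability $(1-\alpha)^{\alpha n}$, and as that layer has $\approx 2^{nH(\alpha)}$ vertices the expected number of survivors is exponentially large for every $\alpha$ up to about $0.64$. Hence one cannot demand a clean front, and the delicate point is to prove that the survivors nonetheless drain fast enough that the cumulative delay over all $n$ layers is $o(n)$ rather than $\Omega(n)$. This requires tracking the joint evolution of the layer densities and of the closed blue-degrees of frontier vertices and showing the coupled system remains in the regime where forcing probabilities are $1-o(1)$ a round after the front passes. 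A secondary difficulty is establishing the ignition probability $\omega(2^{-n})$, for which I would lower-bound the probability of an explicit lucky early trajectory (for instance, fast geometric growth of the cluster along a favourable set of coordinates) rather than attempt to estimate the true typical ignition time.
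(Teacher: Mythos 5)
Your lower bounds are fine and coincide with the paper's (both are dismissed there as trivial eccentricity/radius bounds). Both of your upper-bound plans, however, have genuine gaps at exactly the points where the real work lies.

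For the grid, the gap is the constant. A ``pessimistic comparison process that advances the front only through moves succeeding with probability bounded away from $0$'' yields a time constant of the form $1+\Omega(1)$: if front moves succeed with probability, say, $15/16$, the geometric waiting times cost a constant fraction of the total time, not a $10^{-7}$ fraction. Your remark that the explicit constant ``reflects the crudeness of this comparison'' has it backwards --- the cruder the comparison, the \emph{larger} the constant, and $1+10^{-7}$ is an extremely demanding bound. The paper achieves it by tracking a diagonal window of $d=14$ consecutive front vertices as a Markov chain on $\{0,1\}^{14}$ (with per-vertex forcing probabilities exactly $15/16$, $3/4$ or $0$, justified by a lemma guaranteeing each blue front vertex has exactly two blue neighbours), observing that the front loses ground only when the \emph{entire} window is simultaneously white, and computing numerically that the stationary probability $\mu_{14}$ of the all-white state is below $10^{-7}$; Chernoff--Hoeffding bounds for Markov chains then give speed $(1-2\mu_d)/(1-\mu_d)$, i.e.\ time constant $1+\mu_d/(1-2\mu_d)$, and a separate Hoeffding--Azuma argument controls the discrepancy so the window travels north-east rather than north or east. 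Nothing in your plan produces a failure rate of order $10^{-7}$; moreover, subadditivity plus Azuma only concentrates $T(w)$ around a mean you have no quantitative handle on (and the claimed $O(1)$ bounded differences per coin flip is itself unjustified), so concentration cannot substitute for actually bounding the time constant.

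For the hypercube, you correctly isolate the obstacle --- a clean layer-by-layer front leaves exponentially many white survivors in the middle layers --- but your proposed resolution is circular: survivors are to ``drain once both neighbouring layers have filled in,'' yet those layers never fully fill in; they contain survivors too, which is exactly the problem. The paper breaks the circularity by weakening the invariant: a level-$k$ vertex is \emph{happy} if all but at most $\ln^5 n$ of its lower neighbours are blue. If every vertex up to level $k$ is happy, then a happy (possibly white) level-$k$ vertex stays white for one round with probability at most $\exp\left(-(1+o(1))k^2/n\right)$, so a level-$(k+1)$ vertex fails to be happy only if $\ln^5 n$ of its lower neighbours all stay white, an event of probability $o\left(2^{-n}/n\right)$ --- small enough to survive a union bound over all $2^n$ vertices. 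Hence \emph{happiness}, not blueness, advances exactly one level per round, and once the whole cube is ``very happy'' a single extra round colours every remaining white vertex. Note also that the minimum over $2^n$ independent runs, and with it your $\omega(2^{-n})$ ignition step, is unnecessary: the paper shows a single run from a fixed vertex crosses each of the first $n/\ln^2 n$ levels in $O(\ln n)$ rounds (seed a level, grow the blue neighbourhood geometrically, finish it off), i.e.\ $o(n)$ rounds in total, with probability $1-o(1)$.
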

\medskip

The bounds for $Q_n$ are asymptotically tight. Unfortunately, we could not prove the matching upper bound for $G_{m\times n}$ but our upper bound is very tight, and so it supports the conjecture that \aas\ $pt_{pzf}(G_{n\times n}) \sim n$. This conjecture is supported by independent simulations performed in~\cite{HS2020} as well as our own.  

\section{Preliminaries}

\subsection{Chernoff inequality}

Let us first state a specific instance of Chernoff's bound that we will find useful. Let $X \in \textrm{Bin}(n,p)$ be a random variable with the binomial distribution with parameters $n$ and $p$. Then, a consequence of \emph{Chernoff's bound} (see e.g.~\cite[Corollary~2.3]{JLR}) is that 
\begin{equation}\label{chern}
\Prob( |X-\E X| \ge \eps \E X) ) \le 2\exp \left( - \frac {\eps^2 \E X}{3} \right)  
\end{equation}
for  $0 < \eps < 3/2$. Moreover, let us mention that the bound holds for the general case in which $X=\sum_{i=1}^n X_i$ and $X_i \in \textrm{Bernoulli}(p_i)$ with (possibly) different $p_i$ (again, e.g.~see~\cite{JLR} for more details).

\subsection{Hoeffding--Azuma inequality}

Let $X_0, X_1, \ldots$ be an infinite sequence of random variables that make up a martingale; that is, for any $a \in \N$ we have $\E [X_{a} | X_{a-1}] = X_{a-1}$. Suppose that there exist constants $c_a > 0$ such that $|X_a-X_{a-1}| \le c_a$ for each $a \le t$. Then, the \emph{Hoeffding--Azuma inequality} implies that for every $b > 0$,
\begin{equation}\label{eq:h-a}
\Prob \Big( \exists i (0 \le i \le t) : |X_i - X_0| \ge b \Big) \le 2\exp \left( - \frac {b^2}{2 \sum_{a=1}^t c_a^2} \right).
\end{equation}

\subsection{Chernoff--Heoffding bounds for Markov chains}

This result is due to Chung, Lam, Liu and Mitzenmacher~\cite{CLLM2012}. Let $M$ be a discrete time ergodic Markov chain with state space $[n]$ and the stationary distribution $\pi$. $M$ may be interpreted as either the chain itself or the corresponding $n$ by $n$ transition matrix. The \emph{total variation distance} between $u$ and $w$, two distributions over $[n]$, is defined as 
$$
\norm{u-w}_{TV} = \max_{A \subseteq [n]} \left| \sum_{i \in A} u_i - \sum_{i \in A} w_i \right|.
$$
For any $\eps>0$, the \emph{mixing time} of Markov chain $M$ is defined as
$$
T(\eps) = \min \left\{ t \in \N : \max_x \norm{x M^t - \pi}_{TV} \le \eps \right\}, 
$$
where $x$ is an arbitrary initial distribution. We will also need a definition of the $\pi$-norm. The \emph{inner product under the $\pi$-kernel} is defined as $\langle u,v \rangle_{\pi} = \sum_{i \in [n]} u_i v_i / \pi_i$. Then, the \emph{$\pi$-norm} of $u$ is defined as $\norm{u}_{\pi} = \sqrt{ \langle u,u \rangle_{\pi} }$. Note that, in particular, $\norm{\pi}_{\pi} = 1$.

Now we are ready to state the main result from~\cite{CLLM2012}. Let $T = T(\eps)$ be the $\eps$-mixing time of Markov chain $M$ for $\eps \le 1/8$. Let $(V_1, V_2, \ldots , V_t)$ denote a $t$-step random walk on $M$ starting from an initial distribution $\varphi$ on $[n]$. For every $i \in [t]$, let $f_i : [n] \rightarrow [0, 1]$ be a weight function at step $i$ such that the expected weight $\E_{v\leftarrow \pi}[f_i(v)] = \mu$ for all $i$.  Define the total weight of the walk by $X = \sum^t_{i=1} f_i(V_i)$. There exists some constant $c$ (which is independent of $\mu$, $\delta$ and $\epsilon$) such that for $0 \le \delta \le 1$
\begin{equation}\label{eqn:chern_Markov}
    \Prob\Big( (1 - \delta)\mu t \le X \le (1 + \delta)\mu t \Big) \ge 1 - c\norm{\varphi}_{\pi} \exp{\left(-\delta^2 \mu t / (72T)\right)}.
\end{equation}

\subsection{Useful Coupling} 

We will be using the following obvious coupling to simplify both upper and lower bounds. Having said that, in our case we will only need to prove upper bounds. Indeed, for lower bounds, it might be convenient to make some white vertices blue at some point of the process. Similarly, for upper bounds, one might want to make some blue vertices white. Given a graph $G$, $S,T\subseteq V(G)$, and $\ell \in \N$, let $A(S,T,\ell)$ be the event that starting with blue set $S$, after $\ell$ rounds every vertex in $T$ is blue. The following simple observation was proved in~\cite{EMP2021}.

\begin{lemma}[\cite{EMP2021}]\label{lem:coupling}
For all sets $S_1\subseteq S_2\subseteq V(G)$, $T\subseteq V(G)$, and $\ell\in \N$,
\[
\Prob(A(S_1,T,\ell))\leq \Prob(A(S_2,T,\ell)).
\]
\end{lemma}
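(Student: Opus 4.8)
The plan is to establish this inequality through an explicit monotone coupling of the two probabilistic zero forcing processes, one started from $S_1$ and the other from $S_2$, run on a common probability space driven by the same source of randomness. First I would introduce, for every ordered pair $(u,v)$ with $uv \in E(G)$ and every round $t \in \N$, an independent random variable $U^{(t)}_{u,v}$ distributed uniformly on $[0,1]$. In each process $i \in \{1,2\}$, I let $Z^{(t)}_i$ denote the blue set at the start of round $t$ (so $Z^{(0)}_i = S_i$), and I declare that a blue vertex $u \in Z^{(t)}_i$ forces a white neighbour $v$ in round $t+1$ precisely when $U^{(t+1)}_{u,v} \le |N[u] \cap Z^{(t)}_i| / \deg(u)$. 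A white vertex turns blue whenever at least one incident edge is forced. Since each $U^{(t)}_{u,v}$ is uniform, the marginal law of process $i$ under this construction is exactly the probabilistic zero forcing process started from $S_i$, so the coupling is valid and it suffices to compare the two coupled processes pathwise.

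The heart of the argument is the claim that $Z^{(t)}_1 \subseteq Z^{(t)}_2$ for every $t$, which I would prove by induction on $t$. The base case holds since $Z^{(0)}_1 = S_1 \subseteq S_2 = Z^{(0)}_2$. For the inductive step, assume $Z^{(t)}_1 \subseteq Z^{(t)}_2$ and consider any $v \in Z^{(t+1)}_1$. If $v$ was already blue in process $1$, then $v \in Z^{(t)}_1 \subseteq Z^{(t)}_2 \subseteq Z^{(t+1)}_2$ and we are done; likewise if $v$ is already blue in process $2$. Otherwise $v$ is white in both processes at the start of round $t+1$ and is forced in process $1$ by some $u \in Z^{(t)}_1$, meaning $U^{(t+1)}_{u,v} \le |N[u] \cap Z^{(t)}_1| / \deg(u)$. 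The inductive hypothesis gives $u \in Z^{(t)}_2$ together with the crucial monotonicity $|N[u] \cap Z^{(t)}_1| \le |N[u] \cap Z^{(t)}_2|$, whence $U^{(t+1)}_{u,v} \le |N[u] \cap Z^{(t)}_2| / \deg(u)$ and so $u$ forces $v$ in process $2$ as well, giving $v \in Z^{(t+1)}_2$. This closes the induction.

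The main point to verify carefully is this monotonicity of the per-edge forcing threshold in the blue set: the forcing probability $|N[u] \cap Z|/\deg(u)$ depends on the blue set $Z$ only through $|N[u] \cap Z|$, which is manifestly non-decreasing as $Z$ grows, and the denominator $\deg(u)$ is fixed; this is exactly what lets the shared uniform variable transfer every force from the smaller process to the larger one. With the claim in hand, the conclusion is immediate: on the event $A(S_1,T,\ell)$ we have $T \subseteq Z^{(\ell)}_1 \subseteq Z^{(\ell)}_2$, so $A(S_2,T,\ell)$ also holds, giving the pathwise event inclusion $A(S_1,T,\ell) \subseteq A(S_2,T,\ell)$ on the coupled space. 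Since the coupling preserves the correct marginal distributions, taking probabilities yields $\Prob(A(S_1,T,\ell)) \le \Prob(A(S_2,T,\ell))$, as required.
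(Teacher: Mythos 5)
The paper itself does not prove this lemma: it is imported verbatim as a ``simple observation'' from \cite{EMP2021}, so there is no internal proof to compare against. Your argument is correct and complete, and it is the natural (and standard) proof of this monotonicity: driving both processes with the same per-round, per-ordered-edge uniforms $U^{(t)}_{u,v}$ yields the correct marginal laws, since within a round the forcing attempts are independent Bernoulli variables with parameters $|N[u]\cap Z|/\deg(u)$; the pathwise inclusion $Z^{(t)}_1\subseteq Z^{(t)}_2$ then follows by induction, the key point being exactly the one you isolate---that the forcing threshold $|N[u]\cap Z|/\deg(u)$ is non-decreasing in $Z$---and the event inclusion $A(S_1,T,\ell)\subseteq A(S_2,T,\ell)$ on the coupled space gives the claimed inequality for the probabilities.
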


\section{Hypercubes}

This section is devoted to proving part (a) of Theorem~\ref{thm:main}. 
Let us start with a formal definition of the hypercube. The \emph{$n$-dimensional hypercube} $Q_n$ has vertex set consisting of all binary strings of length $n$ and there is an edge between two vertices if and only if their binary strings differ in exactly one bit. For $0 \le k \le n$, \emph{level $k$} of the hypercube $Q_n$ is defined to be the set of all vertices whose binary strings contain exactly $k$ ones. Note that each vertex in level $k$ has $k$ neighbours in level $k-1$ and $n-k$ neighbours in level $k+1$.

\begin{proof}[Proof of Theorem~\ref{thm:main}(a)]
Trivially, for any vertex $v \in V(Q_n)$, we deterministically have that $pt_{pzf}(Q_n,v) \ge n$. Hence, in order to show that $pt_{pzf}(Q_n) \sim n$, it is enough to show that  \aas\ $pt_{pzf}(Q_n,v) \le n + o(n)$ for some vertex $v\in V(Q_n)$. Let $v$ be the vertex $(0,0,\ldots,0)$ on level $0$; in fact, since $Q_n$ is a vertex-transitive graph, $v$ can be any vertex. 
 
\medskip 

Suppose that for some integer $k$, $0 \le k < n/\ln^2 n$, the following property $\mathcal{P}(k)$ holds at some point of the process: all vertices at levels up to $k$ are blue (including level $k$). We will show that with probability $1-o(n^{-1})$ property $\mathcal{P}(k+1)$ holds after an additional $O(\ln n)$ rounds. Since trivially $\mathcal{P}(0)$ holds, by the union bound (over $n/\ln^2 n$ possible values of $k$) we will conclude that \aas\ $\mathcal{P}(n/\ln^2 n)$ holds after $O(n / \ln n) =o(n)$ rounds. 

Suppose that property $\mathcal{P}(k)$ holds from some $k$, $0 \le k < n/\ln^2 n$. By Lemma~\ref{lem:coupling}, we may assume that all vertices at level $k+1$ are white. It will be convenient to independently consider 3 phases after which property $\mathcal{P}(k+1)$ will hold with probability $1-o(n^{-1})$. The first phase lasts $320 \ln n$ rounds. The probability that a given vertex at level $k+1$ stays white during this phase is at most
$$
\left( 1 - \frac {k+1}{n} \right)^{320 \ln n} \le \exp \left( - \frac {320 (k+1) \ln n} {n} \right) \le 1 - \frac {160(k+1)\ln n}{n} =: p,
$$
since $k \le n/\ln^2 n$. Hence, for a given vertex $w$ at level $k$, the number of neighbours at level $k+1$ that turned blue during this phase is stochastically lower bounded by $\Bin(n-k, 1-p) \ge \Bin(n/2,1-p) \ni X$ with $\E [X] = (1-p)n/2 = 80(k+1)\ln n$. It follows from Chernoff's bound~\eqref{chern} (applied with $\eps=1/2$) that $w$ has at most $40(k+1) \ln n$ blue neighbours at level $k+1$ with probability at most
$$
2 \exp \left( - \frac {\E[X]}{12} \right) = \frac {2}{n^{(80/12)(k+1)}} = o \left( \frac {1}{n^{k+1}} \right).
$$ 
By the union bound (over at most $n^k$ vertices at level $k$), with probability $1-o(n^{-1})$, each vertex at level $k$ has at least $40(k+1) \ln n$ blue neighbours at level $k+1$ at the end of the first phase. As we aim for a statement that holds with probability $1-o(n^{-1})$, we may assume that this property holds once we enter the second phase.

The second phase lasts $\log_{5/4} n$ rounds. As before, let us concentrate on a given vertex $w$ at level $k$. Suppose that $w$ has $\ell$ blue neighbours for some integer $\ell$ such that $40(k+1) \ln n \le \ell \le n/2$ ($w$ has only $k$ neighbours at level $k-1$ so, of course, it includes neighbours at level $k+1$). The number of white neighbours of $w$ (at level $k+1$) that turned blue in one round of the process is stochastically lower bounded by $\Bin(n-\ell, (\ell+1)/n) \ge \Bin(n/2,(\ell+1)/n) \ni Y$ with $\E[Y] = (\ell+1)/2 \ge 20(k+1) \ln n$. We get from Chernoff's bound~\eqref{chern} (applied with $\eps=1/2$) that $Y \le \frac{\ell +1}{4}$  
with probability at most 
$$
2 \exp \left( - \frac {\E[Y]}{12} \right) = \frac {2}{n^{(20/12)(k+1)}} = o\left( \frac {1}{n^{k+1} \ln n} \right).
$$
By the union bound (over at most $n^k$ vertices at level $k$ and at most $\log_{5/4} n$ rounds), with probability $1-o(n^{-1})$, each vertex at level $k$ increases the number of blue neighbours by a multiplicative factor of $5/4$ each round, reaching at least $n/2$ blue neighbours by the end of the second phase. We may assume that this property holds once we enter the third phase.

The third (and last) phase lasts $3 \log_2 n$ rounds. This time, let us concentrate on a given white vertex $w$ at level $k+1$. This vertex has $k+1$ neighbours at level $k$, each of which has at least $n/2$ blue neighbours. Hence, vertex $w$ stays white by the end of this phase with probability at most
$$
\left( \left( \frac 12 \right)^{k+1} \right)^{3 \log_2 n} = 2^{-3(k+1) \log_2 n} = \frac {1}{n^{3(k+1)}} = o \left( \frac {1}{n^{k+2}} \right).
$$
By the union bound (over at most $n^{k+1}$ vertices at level $k+1$), with probability $1-o(n^{-1})$, all vertices at level $k+1$ turn blue by the end of this phase and so property $\mathcal{P}(k+1)$ holds.

\medskip

Since we aim for a statement that holds \aas, we may assume that $\mathcal{P}(n/\ln^2 n)$ holds after $o(n)$ rounds, and continue the process from there. We say that a vertex at layer $k$ is \emph{happy} if all but at most $\ell = \ell(n) = \ln^5 n$ neighbours at layer $k-1$ are blue. Similarly, a vertex at layer $k$ is \emph{very happy} if not only it is happy but also all but at most $\ell$ neighbours at layer $k+1$ are blue. (Note that a happy or even a very happy vertex might still be white.) Trivially, all vertices at levels up to $n/\ln^2 n+1$ are happy (including level $n/\ln^2 n+1$), and all vertices at levels before level $n/\ln^2 n$ are very happy.

Suppose that for some integer $k$, $n/\ln^2 n \le k \le n-1$, all vertices at levels up to $k$ are happy (including level $k$). We will show that after one single round all vertices at layer $k+1$ are going to be happy and all vertices at layer $k-1$ are going to be very happy with probability $1-o(n^{-1})$. By the union bound (over all possible values of $k$), we will get that \aas\ after less than $n$ rounds all vertices of the hypercube are going to be very happy. 

For simplicity, by Lemma~\ref{lem:coupling} we may assume that all vertices at level $k$ are white (despite the fact that they are happy). Let us concentrate on a given vertex $w$ at level $k$. Since $w$ is happy and all of its blue neighbours at level $k-1$ are happy too, the probability that $w$ stays white is at most 
$$
\left( 1 - \frac {k-\ell}{n} \right)^{k-\ell}  \le \exp \left( - (1+o(1)) \frac {k^2}{n}\right) =: p.
$$
Now, the probability that a given vertex at level $k+1$ is \emph{not} happy is at most
\begin{eqnarray*}
\binom{k+1}{\ell} p^\ell &\le& n^{\ell} \exp \left( - (1+o(1)) \frac {k^2}{n} \, \ell \right) \\
&=& \exp \left( \ell \ln n - (1+o(1)) \frac {k^2}{n} \, \ell \right) \\
&\le& \exp \left( \ln^6 n - (1+o(1)) n \ln n \right) \\
&=& o \left( \frac {1}{2^n \, n} \right).
\end{eqnarray*}
Then the property that all vertices at level $k+1$ are happy holds by the union bound (over at most $2^n$ vertices at level $k+1$). 

We may also show that all vertices at level $k-1$ are very happy. 
If $k \ge n-\ell$, then all vertices at level $k-1$ are trivially very happy as they have less than $\ell$ neighbours at level $k$, so there is nothing to show. 
If $k \le n-\ell$ (but still $k \ge n / \ln^2 n$), then a given vertex at level $k-1$ is \emph{not} very happy with probability at most
$$
\binom{n-(k-1)}{\ell} p^\ell \le n^{\ell} \exp \left( - (1+o(1)) \frac {k^2}{n} \, \ell \right) = o \left( \frac {1}{2^n \, n} \right),
$$
and the desired bound holds by the union bound (over at most $2^n$ vertices at level $k-1$).

\medskip

At this point we may assume that all vertices of the hypercube are very happy. It is easy to see that all remaining white vertices will turn blue in one single round. Indeed, since all vertices are very happy, the probability that some vertex stays white is at most 
\begin{eqnarray*}
2^n \left( 1 - \frac {n-2\ell}{n} \right)^{n-2\ell} &=& 2^n \left( \frac {2\ell}{n} \right)^{(1+o(1))n} \\
&=& \exp \left( O(n) - (1+o(1)) n \ln n \right) \\ 
&=& o(1).
\end{eqnarray*}
The proof is finished. 
\end{proof}

\section{Grids}

This section is devoted to proving part (b) of Theorem~\ref{thm:main}. 
Let us start with a formal definition of grids and tori. The \emph{$m$ by $n$ grid graph} $G_{m \times n}$ is the $mn$ vertex graph that is the Cartesian product of a path on $m$ vertices and a path on  $n$ vertices. 
The \emph{$m$ by $n$ torus graph} $T_{m \times n}$  is the Cartesian product of a cycle on $m$ vertices and a cycle on  $n$ vertices. 

We can restrict our focus to the square grid $G_{n \times n}$ or the square torus graph $T_{n \times n}$. For a non-square grid $G_{n \times m}$ with $n < m$, once a central $n \times n$ square is all blue, the two adjacent columns turn blue with probability one on the next time-step and so on. Thus  $pt\left(G_{n \times m}\right) \le pt\left(G_{n \times n}\right) + \left\lceil \frac{m-n}{2}\right\rceil$. A similar argument holds for the non-square torus. Hence, it will be straightforward to generalize the results for asymmetric cases which we will do once we are done with symmetric cases.
 
We define the \emph{origin} of $G_{n \times n}$ to be the central vertex if $n$ is odd and one of the four central vertices if $n$ is even (the choice can be made arbitrarily as all of them are the same up to symmetry). Since $T_{n \times n}$ is vertex transitive, we may fix any vertex of $T_{n \times n}$ to be the origin of $T_{n \times n}$. For a given positive integer $k < \frac{n}{2}$ we define the \emph{$k$-principal-square} to be the $2k+1$ by $2k+1$ sub-grid centred on the origin.

\medskip

Let $s = \left\lfloor n / \ln^2 n \right\rfloor$. We will work in phases, where in phase $i$ we start with all vertices in an $is$-principal-square being blue and end with all vertices in an $(i+1)s$-principal-square being blue. For simplicity, by Lemma~\ref{lem:coupling} we may assume that at the beginning of each phase the only vertices that are blue are the ones that belong to the corresponding principal-square. In fact,  during each  phase we will turn a few more vertices white (if needed) so that the process behaves more predictably. We aim to bound the number of time-steps it takes to go from the $is$-principal-square to the $(i+1)s$-principal square. As a result, in total there will be at most $\frac{1}{2} \ln^2{n}$ phases. Note that the only phase that potentially differs between the grid and the torus is the final phase.

Let us start with the following simple observation that will allow us to ignore a few initial and final phases. 

\begin{lemma}\label{lem:increase_square_by_one}
Let $k \ge 1$. Suppose that at time $t_0$ all vertices in a $k$-principal-square are blue. Then with probability $1-o\left(1/n\right)$ at time $t_0 + 6 \ln{n}$ all vertices in a $(k+1)$-principal-square are blue.
\end{lemma}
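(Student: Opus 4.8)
\emph{Proof proposal.} The plan is to place the origin at the centre, so that the $k$-principal-square is the set of vertices $(x,y)$ with $\max(|x|,|y|)\le k$ and the $(k+1)$-principal-square adds the ``frame'' of vertices with $\max(|x|,|y|)=k+1$. By Lemma~\ref{lem:coupling} we may assume that at time $t_0$ the \emph{only} blue vertices are those of the $k$-principal-square, since starting with more blue vertices only increases the probability that the frame is blue by a given time. I shall assume all relevant vertices are interior vertices of degree $4$; for a boundary-touching (final) phase the smaller degrees only raise the forcing probabilities, so that case is easier.

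First I would show that almost the whole frame blues \emph{deterministically} in a single round. A frame vertex $(k+1,y)$ with $|y|\le k-1$ is adjacent to the side vertex $(k,y)$ of the blue square, whose other three neighbours $(k-1,y)$ and $(k,y\pm1)$ are already blue; hence $|N[(k,y)]\cap Z|=4=\deg(k,y)$, and by the forcing rule $(k,y)$ forces $(k+1,y)$ with probability $1$. The same holds on all four sides, so after round $1$ the only possibly white frame vertices are the three clustered near each corner: the two near-corner side vertices (for the top-right corner, $b=(k+1,k)$ and $c=(k,k+1)$) and the frame corner $d=(k+1,k+1)$.

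It then remains to fill a single corner, the four corners being handled identically by a union bound. The key point is that the blue-square corner $a=(k,k)$ is permanently blue and always keeps its two blue neighbours $(k-1,k)$ and $(k,k-1)$, so $|N[a]\cap Z|\ge 3$ and $a$ forces each of $b,c$ with probability $\ge 3/4$ in every round until they blue. Thus each of $b,c$ is still white after $t$ rounds with probability at most $(1/4)^t$, hence blue within $2\ln n$ rounds except with probability $(1/4)^{2\ln n}=o(1/n)$. As soon as one of $b,c$ blues, the corner $d$ gains a blue neighbour; moreover by then the vertex $(k+1,k-1)$ is also blue (it blued in round $1$), so $b$ retains two blue neighbours besides $d$ and forces $d$ with probability $\ge 3/4$ per round. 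Conditioned on $b$ bluing within $2\ln n$ rounds, $d$ is therefore white after a further $2\ln n$ rounds with probability at most $(1/4)^{2\ln n}=o(1/n)$. Adding these failure probabilities shows each corner is blue within $4\ln n\le 6\ln n$ rounds except with probability $o(1/n)$, and a union bound over the four corners finishes the argument.

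The only genuine subtlety, and the step I would be most careful with, is the two-stage dependence at the corners: the frame corner $d$ cannot be forced until one of its side-neighbours blues, so the argument must be staged, and one must use the true forcing probability $3/4$ supplied by a vertex that keeps at least two blue neighbours. The generic bound ``a white vertex with a blue neighbour is forced with probability $\ge 1/4$ per round'' is too weak here: splitting the $6\ln n$ rounds between the two stages would give only a per-stage white probability of $(3/4)^{3\ln n}=n^{-0.86}=o(1)$, which does not reach the required $o(1/n)$. Using the sharper value $3/4$ (white probability $\le 1/4$ per round) is exactly what makes the two stages fit inside the $6\ln n$ budget with room to spare, consistent with this being only a simple observation.
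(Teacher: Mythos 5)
Your proof is correct and takes essentially the same route as the paper: most of the frame turns blue deterministically in one round, leaving only the four corners and the eight corner-adjacent vertices, which are then handled in two probabilistic stages, each failing with probability $o(1/n)$. The only real difference is quantitative, and your closing claim that the generic ``white with probability at most $3/4$ per round'' bound is too weak is not quite right: the paper uses exactly that generic bound, but with the asymmetric split $4\ln n + 2\ln n$ (giving $8(3/4)^{4\ln n} = o(1/n)$ for the corner-adjacent vertices, and then $4\bigl(9/16\bigr)^{2\ln n} = o(1/n)$ for the corners, since a corner with two blue neighbours stays white with probability at most $(3/4)^2$ per round); it is only the even split $3\ln n + 3\ln n$ that you considered which fails. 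Your sharper use of the forcing probability $3/4$ (valid because the forcing vertex retains two permanently blue neighbours) is a nice refinement that finishes in roughly $4\ln n$ rounds, comfortably inside the stated $6\ln n$ budget.
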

\begin{proof}
Let $k \ge 1$, and suppose that at time $t_0$ all vertices in a $k$-principal-square are blue. 
Note that vertices adjacent to a blue vertex with three blue neighbours turn blue with probability 1, and so all vertices adjacent to a non-corner vertex of the $k$-principal-square will turn blue at time $t_0 + 1$. There are twelve (eight if $k=1$) vertices in the $(k+1)$-principal square that do not turn blue deterministically: the four corner vertices and the eight (four if $k=1$) vertices adjacent to the corners. We will call these \emph{corner--adjacent}.

A corner--adjacent vertex has a blue neighbour at time $t_0$ and so the probability that it stays white in one time-step is at most $3/4$. The probability it remains white after $4\ln{n}$ steps is at most $\left(\frac{3}{4}\right)^{4\ln{n}}$ and so the probability that all corner--adjacent vertices are blue at time $t_0 + 4\ln{n}$ is at least $1 - 8\left(\frac{3}{4}\right)^{4\ln{n}} = 1 - o\left(1/n\right)$. 

Conditioning on a corner vertex having two blue neighbours at time $t_1 = t_0 + 4\ln{n}$, the probability it stays white after the next time-step is at most $(3/4)^2 = 9/16$. The probability it is white after a further $2\ln{n}$ time-steps is at most $\left(\frac{9}{16}\right)^{2\ln{n}}$. In particular, the probability that all four corner vertices are blue at time $t_1 + 2\ln{n}$ steps is at least $1 - 4\left(\frac{9}{16}\right)^{2\ln{n}} = 1 - o\left(1/n\right)$.
\end{proof}

Lemma \ref{lem:increase_square_by_one} tells us that with high probability any single phase takes $O\left(6s\ln{n}\right) = o(n)$ time-steps, so we may safely ignore what happens in the first five phases and the last phase.  In particular, our argument is the same whether we are run the process on the grid or the torus. Hence, we may focus on the square grid graph $G_{n \times n}$.

\medskip

Recall that $s = \lfloor n / \ln^2 n \rfloor$. Let $5 \le i < n/2s$ and suppose that at time $t_0$ all vertices in the $(is)$-principal-square are blue. By Lemma~\ref{lem:coupling}, we may assume that these are the only blue vertices at that point of the process. We will show that with probability $1 - o(1/n)$, at time $t_0 + (1+\eps+o(1)) s$ all vertices in the $(i+1)s$-principal-square are blue, where $\eps > 0$ will be an explicit, very small constant that we are not ready to introduce yet. 

For simplicity, we identify the vertices of $G_{n \times n}$ with pairs $(a,b)$ from the set 
$$
\left\{ -\left\lfloor \frac{n-1}{2} \right\rfloor, \ldots, \left\lfloor \frac{n}{2} \right\rfloor \right\} \times \left\{-\left\lfloor \frac{n-1}{2} \right\rfloor, \ldots, \left\lfloor \frac{n}{2} \right\rfloor \right\}
$$ 
in the natural way with the origin at $(0,0)$. In particular, note that a $k$-principal-square contains all vertices $(a,b)$ with $|a| \le k$ and $|b| \le k$. We will focus on the top-right quadrant where both coordinates are positive; by symmetry, the argument for the other quadrants will be exactly the same.

In order to control how the process progresses with time, we will pay attention to a few blue vertices at the top-right corner that are at the same distance from the origin. Hence, the following definition will be useful. For fixed positive integer $d$, we define a \emph{$d$-window} (\emph{rooted at vertex $(a-d+1,b)$}) to be a $d$-tuple of vertices 
$$
\Big( (a-d+1,b), (a-d+2,b-1), (a-d+3,b-2),\ldots,(a,b-d+1) \Big),
$$ 
where $d < a,b < n/2$.  Note that the distance of all vertices from such a $d$-window from the origin is $a+b-d+1$.

Blue vertices from a $d$-window at distance $\ell$ from the origin will most likely turn some other vertices at distance $\ell+1$ from the origin to be blue. If that happens, we will simply move the window appropriately and continue the process. In order to compute the transition probabilities between given configurations that might occur in a $d$-window, it will be convenient to assume that each blue vertex in the window has exactly two blue neighbours, namely the bottom and the left neighbours. We may do so based on the following observation. 

\begin{lemma}\label{lem:two_blue_nbrs_persists}
Suppose that at time $t$, a $k \times k$ sub-grid of vertices are all blue for some integer $k \ge 2$. Let $v$ be the top-right corner (blue) vertex of this sub-grid. If a neighbour of $v$ turns blue at time $t+1$, then it is the top-right corner of a $(k-1) \times (k-1)$ sub-grid of vertices that are all blue.
\end{lemma}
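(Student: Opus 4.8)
The plan is to fix coordinates so that the top-right corner of the blue $k \times k$ sub-grid is $v = (a,b)$ and the sub-grid is $S = \{(x,y) : a-k+1 \le x \le a,\ b-k+1 \le y \le b\}$, all blue at time $t$. Since the two ``inward'' neighbours $(a-1,b)$ and $(a,b-1)$ of $v$ already lie in $S$ and are blue at time $t$, a neighbour of $v$ that \emph{turns} blue at time $t+1$ must be one of the two ``outward'' neighbours $(a+1,b)$ or $(a,b+1)$. Reflecting across the main diagonal (the coordinate swap $(x,y)\mapsto(y,x)$) interchanges these two cases, so I would treat only $w = (a+1,b)$.

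The crucial observation is that, although $w$ itself is forced only probabilistically, the rest of the new column is forced \emph{deterministically}. Concretely, for each $y$ with $b-k+2 \le y \le b-1$, consider the blue vertex $(a,y)$ on the right edge of $S$. Its left neighbour $(a-1,y)$, its upper neighbour $(a,y+1)$, and its lower neighbour $(a,y-1)$ all lie in $S$ (here one uses $k \ge 3$ so that this range of $y$ is nonempty, and that $y$ is strictly between the extreme rows of $S$), hence are blue at time $t$; so $(a,y)$ has at most one white neighbour, namely $(a+1,y)$. By the deterministic zero-forcing colour change rule, $(a+1,y)$ is therefore blue at time $t+1$ — and if it happened to be blue already, the conclusion holds trivially.

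Combining these facts, I would conclude that at time $t+1$ the whole column $x = a+1$ is blue in rows $b-k+2 \le y \le b-1$ (by the forcing above) and in row $y = b$ (this is exactly the hypothesis that $w$ turns blue). Adjoining the columns $a-k+3 \le x \le a$ in rows $b-k+2 \le y \le b$, all of which lie inside $S$ and are thus blue, shows that the axis-aligned square $\{(x,y): a-k+3 \le x \le a+1,\ b-k+2 \le y \le b\}$ is entirely blue at time $t+1$. This is a $(k-1) \times (k-1)$ sub-grid whose top-right corner is precisely $w = (a+1,b)$, as required. The degenerate case $k = 2$ needs no forcing at all: the claimed $(k-1)\times(k-1) = 1\times 1$ sub-grid is just $\{w\}$, which is blue by hypothesis.

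I expect the only real subtlety — the main obstacle — to be the realisation above that deterministic forcing silently fills in the interior of the new column; an attempt that tracks only the probabilistically forced vertex $w$ makes the statement look false for $k \ge 3$, since one new vertex cannot by itself complete a column of height $k-1$. Once the deterministic forcing is in hand, the rest is just checking that the stated index ranges all sit inside $S$, which is routine.
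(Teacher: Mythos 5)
Your proof is correct and follows essentially the same route as the paper's: both arguments rest on the observation that the interior of the new column is forced deterministically (each vertex on the right edge of $S$ below $v$ is blue with three blue neighbours, so it forces its unique white neighbour with probability $1$), after which the hypothesised new blue vertex completes the $(k-1)\times(k-1)$ square. Your treatment is just a more explicitly coordinatised version, with the diagonal symmetry replacing the paper's ``analogous argument'' for the vertex above $v$ and with the $k=2$ case checked separately.
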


\begin{proof}
Consider the four vertices adjacent to $v$. Clearly, the vertex below $v$ and the vertex to the left of $v$ are each the top right corner of a $(k-1) \times (k-1)$ all blue grid. Let $u$ be the vertex to the right of $v$. The column of $k-2$ vertices directly below $u$ are all blue at time $t + 1$ with probability 1 (as each is adjacent to a blue vertex with three blue neighbours at time $t$). Thus, if $u$ turns blue at time $t+1$, then it is the top-right corner of a $(k-1) \times (k-1)$ blue sub-grid---see Figure~\ref{fig:lemma_square}. An analogous argument holds for the vertex above $v$.
\end{proof}

\begin{figure}[h]
    \centering
    \includegraphics{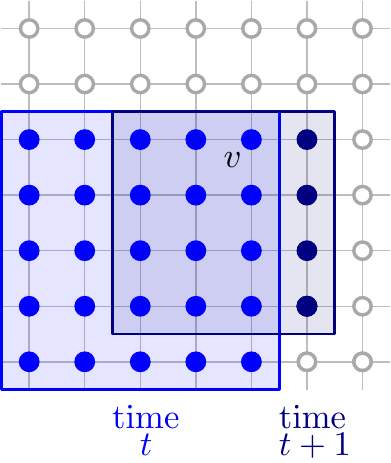}
    \caption{A $5\times 5$ blue sub-grid at time $t$ and a $4 \times 4$ blue sub-grid at time $t+1$ as described in Lemma \ref{lem:two_blue_nbrs_persists}.}
    \label{fig:lemma_square}
\end{figure}

Now, we are ready to define an \emph{auxiliary} process, which monitors the behaviour of the original process, giving a sequence of triples $(D_j,C_j,t_j)$ of $d$-windows $D_j$ at times $t_j$, and associated binary $d$-tuples $C_j$ indicating which vertices in the $d$-window are blue at time $t_j$ and which ones are white (1 represents blue and 0 represents white). This process will control the expansion of the blue principal squares and so can be used to upper bound the number of rounds needed to reach the end of a given phase. We will run this process for at most $3s$ steps and stop it \emph{prematurely} if the end of the phase is not reached after $3s$ rounds. However, we will show that we do \emph{not} stop prematurely with probability $1-o(1/n)$ and, in fact, when $d$ is large enough, with this probability we will finish in almost $s$ rounds which is a trivial lower bound for the number of rounds needed to finish a given phase.

\begin{enumerate}[(1)]
    \item We start with $D_0 = ( (is-d+1,is),\ldots,(is, is-d+1) )$ and time-step $t_0$. Since all vertices in $D_0$ are blue, the corresponding configuration is $C_0 = (1, \ldots, 1)$. (In fact, the initial triple is not important. Another natural starting point would be to start with a $d$-window including vertex $(is,is)$ and so would have only one blue vertex in the corresponding configuration.)
    \item Given the triple $(D_j,C_j,t_j)$, we define triple $(D_{j+1},C_{j+1},t_{j+1})$ differently according to whether  $C_j$ contains a blue vertex or not. Suppose that $D_j$ is a $d$-window rooted at vertex $(a-d+1,b)$.
    \item \label{item:D_j has blue vertex} If $C_j$ contains a blue vertex, let $t_{j+1} = t_j +1$ and consider the $d$-windows 
    \begin{eqnarray*} 
    X &=& ((a-d+1,b+1), \ldots,(a,b-d+2)) \quad \text{and} \\
    Y &=& ((a-d+2,b), \ldots,(a+1,b-d+1)).
    \end{eqnarray*} 
    Let $D_{j+1}$ be whichever of $X$ or $Y$ has more blue vertices at time $t_{j+1}$ (see Figure~\ref{fig:windowsXY} for an illustration). If they have an equal number of blue vertices, then pick one of the two uniformly at random. Let $C_{j+1}$ be the corresponding configuration capturing the information about which vertices in the window are blue at time $t_{j+1}$. Go to step (2).
    
    \begin{figure}[h]
        \centering
        \includegraphics{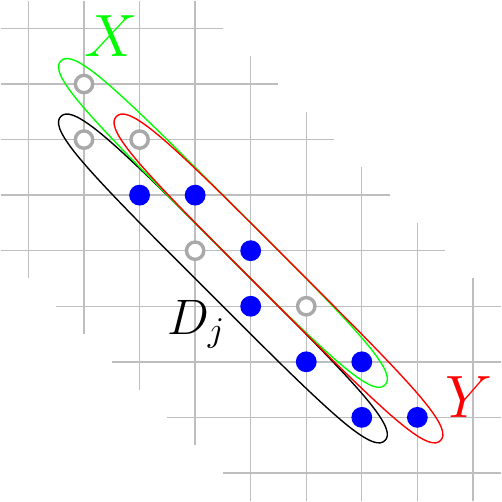}
        \caption{A 6-window $D_j$ and 6-windows $X$, $Y$ as defined in Step (3) of the auxiliary process. Since $Y$ has more blue vertices, $D_{j+1}=Y$.}
        \label{fig:windowsXY}
    \end{figure}
    
    \item \label{item:D_j all white} If  {$C_j$} is all white (that is, all vertices in $D_j$ are white at time $t_j$), we set $t_{j+1} = t_j$. In this situation we need to introduce an \emph{auxiliary} triple $(D_{j+1},C_{j+1},t_{j+1})$ that corresponds to an earlier $d$-window that is not all white at time $t_j$. To that end we chose $D_{j+1}$ to have the same distance from the origin as $D_{j-1}$. Note that $C_{j-1}$ is not all zeros and represents a configuration in the $d$-window $D_{j-1}$ at time $t_{j-1}$; indeed, the process is designed in such a way that no $C_{j-1}$ and $C_{j}$ can be all zeros at any step in this process. We may simply fix $D_{j+1} = D_{j-1}$ and $C_{j+1}=C_{j-1}$ but our goal is to create a memory-less Markov chain so we will follow a different strategy.
    
    Let $(x,y)$ be a random vertex in $D_{j-1}$ that was blue at time $t_{j-1}$. Let $D_{j+1}$ be the $d$-window centred on $(x,y)$. To be specific, if $d$ is odd then let 
    $$D_{j+1} = \left( \left(x - \frac{d-1}{2}, y + \frac{d-1}{2} \right), \ldots, \left(x  + \frac{d-1}{2}, y - \frac{d-1}{2}\right) \right), $$
    and if $d$ is even then let $D_{j+1}$ be one of 
    $$\left( \left(x - \frac{d-2}{2},y + \frac{d-2}{2}\right), \ldots, \left(x + \frac{d}{2},y - \frac{d}{2}\right) \right)$$ and 
    $$\left( \left( x - \frac{d}{2},y + \frac{d}{2} \right), \ldots, \left( x +\frac{d-2}{2},y - \frac{d-2}{2} \right) \right)$$ chosen uniformly at random. At time $t_{j-1}$, $(x,y)$ is blue and it follows from Lemma~\ref{lem:two_blue_nbrs_persists} that $(x-1,y)$ and $(x,y-1)$ are blue and have three blue neighbours. (See the discussion below for details about implications of Lemma~\ref{lem:two_blue_nbrs_persists}.) This means at time $t_{j+1} = t_j = t_{j-1}+1$, the vertices $(x-1,y+1),(x,y),(x+1,y-1)$ are each blue and in $D_{j+1}$. Applying Lemma~\ref{lem:coupling}, we may assume that any vertices in $D_{j+1}$ that are not one of $(x-1,y+1), (x,y), (x+1,y-1)$ are white at time $t_{j+1} = t_j$. Let us stress again that by our choice of $(x,y)$, $D_{j+1}$ is not all white at time $t_{j+1}$. Go to step (2).
\end{enumerate}

\medskip

Recall that we start the current phase at time $t_0$ with all vertices in the $(is)$-principal-square being blue. Moreover, we already dealt with the first few phases so $i \ge 5$. By this assumption, every vertex in the starting $d$-window $D_0$ is the top-right corner of a $(2is+1-d) \times (2is+1-d)$ blue sub-grid at time $t_0$. Applying  Lemma~\ref{lem:two_blue_nbrs_persists} recursively for each $j$, we see that if a vertex in $D_j$ turns blue at time $t_j$  because of its blue neighbour in $D_{j-1}$, then it is the top-right corner of a $(2is + 1 -d-j) \times (2is + 1-d-j)$ blue sub-grid. In particular, since we run the auxiliary process for at most $3s$ steps, it will be the top-right corner of a $3 \times 3$ blue sub-grid. 

\medskip

We are now ready to investigate the transition probabilities between configurations $C_j$ and guide the auxiliary process so that it yields a Markov chain. Suppose that $D_j = ((a-d+1,b), \ldots,(a,b-d+1))$. Let us first investigate step (\ref{item:D_j has blue vertex}) of the auxiliary process. By the assumption of this step, $C_j$ is not all zeros; that is, $D_j$ contains a blue vertex at time $t_j$. By Lemma~\ref{lem:coupling} we may assume that at time $t_j$ all vertices in the next $d$-windows $X = ((a-d+1,b+1), \ldots,(a,b-d+2))$ and $Y = ((a-d+2,b), \ldots,(a+1,b-d+1))$ are white, and that the vertices $(a-d,b+1)$ and $(a+1,b-d)$ are white. Moreover, as discussed above, Lemma~\ref{lem:two_blue_nbrs_persists} implies that any vertex in $D_j$ that is blue at time $t_j$ has exactly two blue neighbours. Thus the probability that a vertex in $X \cup Y$ turns blue at time $t_j + 1$ is exactly $1-(1/4)^2=15/16$ if it has two blue neighbours in $D_j$ at time $t_j$, $1-1/4=3/4$ if it has one blue neighbour in $D_j$ at time $t_j$, and $0$ otherwise. In particular, the configuration $C_{j+1}$ representing the state of $D_{j+1}$ at time $t_{j+1}$ depends only on the configuration $C_j$ representing the state of $D_j$ at time $t_j$. The transition probability $P_d(C,C')$ between any two possible configurations $C$ and $C'$ can be easily computed.

Now, let us investigate step~(\ref{item:D_j all white}). This is set up so that if $C_j$ is all white (that is, $D_j$ contains no blue vertices at time $t_j$) then deterministically $C_{j+1}$ consists of three centred consecutive blue vertices and all other vertices white. In particular, for $d \ge 3$ odd and $C$ all white, $P_d(C,C') = 1$ if $C'=(\underbrace{0,\ldots, 0}_{(d-3)/2}, 1, 1, 1,  \underbrace{0,\ldots, 0}_{(d-3)/2})$ and $0$ otherwise. For $d \ge 4$ even and $C$ all white, 
$$P_d(C,C') = 
    \begin{cases} 
        1/2 &\text{if } C' = (\underbrace{0,\ldots, 0}_{(d-4)/2}, 1, 1, 1,  \underbrace{0,\ldots, 0}_{(d-2)/2}) \text{  or } (\underbrace{0,\ldots, 0}_{(d-2)/2}, 1, 1, 1,  \underbrace{0,\ldots, 0}_{(d-4)/2})\\
        0 &\text{otherwise.}   
    \end{cases}
$$
Finally, for the degenerate case $d=2$ and $C$ all white, $P_2(C,C')=1$ if $C'$ is all blue, and 0 otherwise.

\medskip

Since the state of $D_{j+1}$ at time $t_{j+1}$ (namely, configuration $C_{j+1}$) depends only on the state of $D_j$ at time $t_j$ (namely, configuration $C_{j}$) independent of the time and the choice of vertices, we can capture the behaviour of the process using a Markov chain $(C_j)_{j=0}^{\infty}$ on state space $\Omega =  \{0,1\}^d$. 
For example, when $d=2$ the Markov chain has transition matrix
$$ P_2 =
\begin{pmatrix}
0 & 0 & 0 & 1\\
 1/16  &  9/32 &    3/32 &  9/16 \\
1/16  & 3/32 &  9/32 &  9/16 \\
1/256 & 15/256 & 15/256 & 225/256
\end{pmatrix}$$
where the states are in the order $(0, 0), (0,1),(1,0),(1,1)$.
It is easy to see that for any $d$ this Markov chain is irreducible and aperiodic, and so it has a limiting distribution which is equal to the stationary distribution. We define $\mu_d$ to be the probability of the all white state $C=(0,0,\ldots,0)$ in the limiting distribution. 

The next lemma is the crux of our argument. 

\begin{lemma} \label{lem:grids_main}
Take $s = \lfloor n / \ln^2 n \rfloor$. Let $5 \le i < n/(2s)$ and suppose that at time $t_0$ all vertices in the $(is)$-principal-square are blue. With probability $1 - o(1/n)$, before the end of round $t_0 + (1+o(1)) \frac{1-\mu_d}{1-2\mu_d}2s$ all vertices in the $(i+1)s$-principal-square are blue, where $\mu_d$ is the probability of the all white state $C=(0,0,\ldots,0)$ in the limiting stationary distribution of the Markov's chain $P_d$ defined above.
\end{lemma}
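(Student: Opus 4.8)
The plan is to read off both the number of real rounds and the net progress of the blue frontier directly from the Markov chain $(C_j)$, and only then to translate ``the tracked window has advanced far enough'' into ``the whole $(i+1)s$-principal-square is blue''. Write $W_J = |\{0 \le j < J : C_j = (0,\ldots,0)\}|$ for the number of all-white configurations among the first $J$ steps of the auxiliary process. By construction a step taken from a non-all-white configuration (step~(\ref{item:D_j has blue vertex})) advances the real time by $1$ and moves the window one unit \emph{further} from the origin, whereas a step taken from an all-white configuration (step~(\ref{item:D_j all white})) leaves the real time unchanged and moves the window one unit \emph{closer}. Hence after $J$ steps the elapsed real time is exactly $J - W_J$ and the net distance gained is exactly $J - 2W_J$, so the entire quantitative content of the lemma reduces to showing that $W_J = (1+o(1))\mu_d J$ with probability $1 - o(1/n)$.

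First I would establish this concentration. Since $P_d$ is irreducible and aperiodic on the finite state space $\{0,1\}^d$ with $d$ fixed (independent of $n$), its $(1/8)$-mixing time $T$ is a constant, and the all-white indicator $f(C) = \mathbb{1}[C = (0,\ldots,0)]$ has stationary mean $\mu_d$. Applying the Chernoff--Hoeffding bound for Markov chains~\eqref{eqn:chern_Markov} with this weight function and length $t = J$ shows that $W_J = (1 \pm \delta)\mu_d J$ fails with probability at most $c\,\norm{\varphi}_{\pi}\exp(-\delta^2 \mu_d J/(72 T))$. As we will run for $J = \Theta(s) = \Theta(n/\ln^2 n)$ steps while $\mu_d$ and $T$ are constants, we may take $\delta = \delta(n) \to 0$ slowly (say $\delta = 1/\ln n$) and still have $\delta^2 \mu_d J / T = \Omega(n/\ln^4 n) \gg \ln n$; since $\varphi$ is supported on a single state and $\pi$ has constant smallest entry, $\norm{\varphi}_\pi$ is polynomially bounded, so the failure probability is $o(1/n)$. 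Choosing $J = (1+o(1))(2s + c)/(1 - 2\mu_d)$ for a margin $c = o(s)$ fixed below then gives, on this event, net distance at least $2s + c$ and real time $J - W_J = (1+o(1))\tfrac{1-\mu_d}{1-2\mu_d}(2s+c) = (1+o(1))\tfrac{1-\mu_d}{1-2\mu_d}\,2s$. This uses $\mu_d < 1/2$ (indeed $\mu_d \le 1/6$, so that $J \le 3s$ and the process is not stopped prematurely), which is exactly where the hypothesis that $d$ is large will be used.

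It remains to pin down \emph{where} along the advancing anti-diagonal the window ends up, and to fill the square. The window's transverse coordinate (essentially $a - b$ for its root $(a-d+1,b)$) changes by $O(d) = O(1)$ per step, and the tie-breaking and re-centring rules in steps~(\ref{item:D_j has blue vertex}) and~(\ref{item:D_j all white}) are symmetric under reflection in the main diagonal, so this coordinate is a martingale with bounded increments; the Hoeffding--Azuma inequality~\eqref{eq:h-a} over the $J = O(s)$ steps bounds its total displacement by $c := \sqrt{s}\,\ln n = o(s)$ with probability $1 - o(1/n)$. Now deterministic (probability-$1$) forcing from the solid $(is)$-principal-square fills every quadrant vertex with $a + b \le 2is$ well within the time budget, so the only part of the $(i+1)s$-square that the window must certify is the corner triangle $\{a + b > 2is,\ a,b \le (i+1)s\}$. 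On the good event the window reaches distance at least $2(i+1)s + c$ while staying within $c$ of the diagonal, hence its blue vertex sits at some $(a^\ast, b^\ast)$ with $a^\ast, b^\ast \ge (i+1)s$; by the recursive use of Lemma~\ref{lem:two_blue_nbrs_persists} (valid because $i \ge 5$ and at most $3s$ steps have elapsed) this vertex is the top-right corner of an all-blue sub-grid of side at least $2is + 1 - d - 3s \ge 7s$, which therefore engulfs the whole corner triangle. Running this in each of the four quadrants and both diagonal orientations, and taking a union bound over these, over the at most $\tfrac12 \ln^2 n$ phases, and absorbing the $o(1/n)$ error, completes the argument.

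I expect the main obstacle to be the concentration step: one must verify that the mixing time $T$ and the $\pi$-norm of the starting distribution are bounded uniformly in $n$, and then tune $\delta = o(1)$ so that the multiplicative error is asymptotically negligible while the exponent in~\eqref{eqn:chern_Markov} still dominates $\ln n$. A secondary subtlety is the geometric coupling in the last paragraph, namely ensuring that the martingale drift bound $c = o(s)$ guarantees the exact corner of the square is covered by the window's backing sub-grid, rather than merely that the frontier reaches distance $2(i+1)s$ somewhere.
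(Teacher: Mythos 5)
Your proposal follows the same route as the paper's proof---the identities (real time elapsed) $= J - W_J$ and (net radial progress) $= J - 2W_J$, concentration of $W_J$ via~\eqref{eqn:chern_Markov}, a transverse-drift bound via a martingale argument, and the endgame via recursive application of Lemma~\ref{lem:two_blue_nbrs_persists}---so at the level of strategy there is nothing to distinguish. However, your transverse-drift step contains a genuine gap: the sequence $(\disc(D_j))_{j \ge 0}$ is \emph{not} a martingale, and reflection symmetry of the rules does not make it one. Symmetry only implies that $\disc(D_j)$ and $-\disc(D_j)$ are equal in law; the martingale property requires the increment to have mean zero \emph{conditionally on the current configuration}, and this fails whenever $C_j$ is not palindromic, because the window drifts toward whichever of its ends the blue vertices occupy. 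Concretely, take $d=2$ and $C_j = (1,0)$, so $(a-1,b)$ is blue (with, by the standing assumption, exactly two blue neighbours) and $(a,b-1)$ is white. Then each of $(a-1,b+1)$ and $(a,b)$ turns blue independently with probability $3/4$, while $(a+1,b-1)$ cannot turn blue; hence $X$ never has fewer blue vertices than $Y$, and has strictly more with probability $3/4$, giving
$$
\E \big[ \disc(D_{j+1}) - \disc(D_j) \,\big|\, C_j = (1,0) \big] \;=\; \tfrac{3}{4}\cdot(-1) + \tfrac{1}{4}\cdot 0 \;=\; -\tfrac{3}{4}.
$$
Since the chain spends a constant fraction of its $\Theta(s)$ steps in non-palindromic configurations, you cannot feed these increments into Hoeffding--Azuma~\eqref{eq:h-a}: the cancellation of the drift is a long-run average effect (mirror configurations are visited equally often in stationarity), not a conditional one, and Azuma needs the conditional statement.

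This is precisely the difficulty the paper's proof is engineered around, and it is the one idea missing from your write-up. The paper passes to the renewal times $(j_k)$ at which the configuration is all white: step~(\ref{item:D_j all white}) then resets the configuration to a fixed, centred, palindromic one, and since the re-centring rule is itself reflection-equivariant, the displacement of $\disc$ across an entire renewal block is conditionally symmetric; thus the discrepancies sampled at renewal times form a genuine martingale. To make its increments bounded---so that~\eqref{eq:h-a} applies---one needs the further high-probability estimate that consecutive all-white times are at most $2\cdot 16^{d+1}\ln n$ apart (from any non-all-white configuration the chain hits the all-white state in one step with probability at least $(1/16)^{d+1}$), together with a correction for the steps after the last renewal. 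Alternatively one could write $\disc(D_J)$ as a bounded-increment martingale plus $\sum_j g(C_j)$, where $g(c) = \E[\disc(D_{j+1})-\disc(D_j) \mid C_j = c]$ satisfies $\E_\pi g = 0$ by symmetry of $\pi$, and control the second term by~\eqref{eqn:chern_Markov}; but some such device is unavoidable, and your one-line Azuma application is not valid as stated. A secondary, purely quantitative slip: with $\delta = 1/\ln n$ the permitted fluctuation of $W_J$ is $\Theta(\delta s) = \Theta(s/\ln n)$, which swamps your margin $c = \sqrt{s}\,\ln n$; the slack built into $J$ and the margin must exceed $\delta s$ (the paper takes $\delta = \ln^2 n/\sqrt{n}$ exactly so that margins of size $\sqrt{\ell}\,\ln^2 n$ suffice). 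That part is easily repaired; the martingale issue is the substantive one.
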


Applying the Lemma together with earlier observations we immediately obtain the following corollary.

\begin{corollary} \label{cor:main}
The following bound holds \aas:
$$
pt\left(G_{n \times n}\right) \le (1+o(1)) \frac{1-\mu_d}{1-2\mu_d} \, n,
$$
where $\mu_d$ is the probability of the all white state $C=(0,0,\ldots,0)$ in the limiting stationary distribution of the Markov's chain $P_d$ defined above.
\end{corollary}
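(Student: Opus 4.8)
The plan is to read the phase-completion time directly off the auxiliary Markov chain $(C_j)$ by sorting its moves into the two types in Steps~(\ref{item:D_j has blue vertex}) and~(\ref{item:D_j all white}). Write $\ell_j$ for the $L^1$-distance of the window $D_j$ from the origin and recall that a step of type~(\ref{item:D_j has blue vertex}) (taken whenever $C_j$ is not the all-white state) raises both the real time $t_j$ and $\ell_j$ by exactly $1$, whereas a step of type~(\ref{item:D_j all white}) (taken exactly when $C_j=(0,\dots,0)$) leaves $t_j$ fixed and lowers $\ell_j$ by $1$. Hence, after $N$ steps of the chain, setting $N_4 = \sum_{j<N}\mathbf{1}[C_j=(0,\dots,0)]$ and $N_3 = N-N_4$, the real time elapsed is exactly $N_3$ while the net distance gained is $N_3-N_4 = N-2N_4$. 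The phase ends once the window has advanced by the distance $2s+O(d) = (1+o(1))2s$ separating the corners of the $(is)$- and $(i+1)s$-principal-squares; letting $\tau$ be the first step at which this occurs, we have $N_3(\tau) = 2s + N_4(\tau) + o(s)$. Everything therefore reduces to concentrating $N_4(\tau)$ near $\frac{\mu_d}{1-2\mu_d}2s$, which gives the claimed $N_3(\tau) = (1+o(1))\frac{1-\mu_d}{1-2\mu_d}2s$.

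To control $N_4$ I would apply the Chernoff--Hoeffding bound for Markov chains~\eqref{eqn:chern_Markov} with the constant weight function $f\equiv\mathbf{1}[C=(0,\dots,0)]$, for which $\E_\pi[f]=\mu_d$. Since $P_d$ lives on the fixed finite space $\{0,1\}^d$ with $d=O(1)$ and is irreducible and aperiodic, its mixing time is $T=O(1)$ and the $\pi$-norm of the point-mass initial distribution $\varphi$ is $\norm{\varphi}_\pi=O(1)$; both are absolute constants. Running the chain for a fixed over-estimate $N^{*} = (1+o(1))\frac{2s}{1-2\mu_d}\le 3s$ (the inequality, which is exactly the promise that we do not stop prematurely, holds once $d$ is large enough that $\mu_d<1/6$) and applying~\eqref{eqn:chern_Markov} with $\delta=\delta(n)\to 0$ chosen so that $\delta^2 s \gg \ln n$ (for instance $\delta=n^{-1/4}$) yields
\[
\Prob\Big( N_4(N^{*}) \le (1+\delta)\mu_d N^{*} \Big) \ge 1 - c\,\norm{\varphi}_\pi \exp\!\left(-\tfrac{\delta^2 \mu_d N^{*}}{72T}\right) = 1-o(1/n).
\]
On this event $N_4$ is small enough that the net distance $N^{*}-2N_4(N^{*})$ already exceeds $2s$ by step $N^{*}$, so $\tau\le N^{*}$; and since $N_4$ is nondecreasing, $N_4(\tau)\le N_4(N^{*})\le (1+\delta)\mu_d N^{*}$. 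Substituting into $N_3(\tau)=2s+N_4(\tau)+o(s)$ gives the stated time bound.

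It then remains to convert this statement about a single tracked window into ``the whole $(i+1)s$-principal-square is blue.'' Here the work is front-loaded into Lemma~\ref{lem:two_blue_nbrs_persists}: applied recursively as in the paragraph preceding it, it guarantees that a window vertex turning blue at step $j$ is the top-right corner of a $(2is+1-d-j)\times(2is+1-d-j)$ all-blue sub-grid, and since $i\ge 5$ and $j\le N^{*}=O(s)$ this side length is $\ge (i+1)s$, reaching back past the new square. Thus once the tracked window reaches the anti-diagonal through $((i+1)s,(i+1)s)$ with that corner blue, the entire top-right quadrant of the new square is blue; the four-fold symmetry (plus a union bound, costing a further $o(1/n)$, over the windows needed to cover the front and over the four quadrants) then fills the square. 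The main obstacle is the concentration step: the phase ends at the \emph{random} stopping step $\tau$, whereas~\eqref{eqn:chern_Markov} concerns a walk of deterministic length, so one must run the chain for the fixed over-estimate $N^{*}$ and exploit both the monotonicity $N_4(\tau)\le N_4(N^{*})$ and the positive drift $1-2\mu_d>0$ (precisely why $d$ must be large). The remaining ingredients—the explicit transition probabilities, the bounds $\norm{\varphi}_\pi,T=O(1)$, and the geometric bookkeeping—are routine.
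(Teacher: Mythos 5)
Your bookkeeping of the auxiliary chain is exactly the paper's: a non-white step advances both the time and the distance by one, an all-white step freezes time and retreats the window by one, so after $N$ steps the elapsed time is $N-N_4$ and the net distance gained is $N-2N_4$; like the paper, you control $N_4$ by applying the Markov-chain Chernoff--Hoeffding bound \eqref{eqn:chern_Markov} to the indicator of the all-white state over a deterministic number of steps (the paper takes $\ell = (2/(1-2\mu_d)+\ln^4 n/\sqrt n)\,s$, you take $N^*$), and you convert blue window vertices into large blue sub-grids via Lemma~\ref{lem:two_blue_nbrs_persists} with $i\ge 5$. Up to that point the proposal is sound and matches the paper.

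The genuine gap is in the final geometric step. You assert that ``once the tracked window reaches the anti-diagonal through $((i+1)s,(i+1)s)$ with that corner blue, the entire top-right quadrant of the new square is blue,'' but nothing in your argument makes the window pass anywhere near that corner. At each non-white step the window moves either up (window $X$) or right (window $Y$), so its transverse position performs a random walk; at your stopping time $\tau$ the blue vertex you have produced sits at some $(a,b)$ with $a+b = 2(i+1)s+O(d)$, and typically $|a-b| = \Theta(\sqrt{s})$. If, say, $b < (i+1)s$, then the blue sub-grid cornered at $(a,b)$ supplied by Lemma~\ref{lem:two_blue_nbrs_persists} contains no vertex with second coordinate exceeding $b$, so it misses the top strip of the quadrant and the phase is not finished; monotonicity does not rescue you, because the single-window analysis says nothing about those vertices. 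This is exactly why the paper needs the entire second half of the proof of Lemma~\ref{lem:grids_main}: it shows that the discrepancy $\disc(D_j)=a-b$, sampled at the regeneration times $j_k$, is a martingale with increments $O(\ln n)$, applies Hoeffding--Azuma \eqref{eq:h-a} to get $|\disc(D_\ell)| = O(\sqrt n \ln n)$ with probability $1-o(1/n)$, and---crucially---runs the chain long enough to \emph{overshoot} the target distance by $\Theta(\sqrt n \ln^2 n)$, which dominates the discrepancy and forces \emph{both} coordinates of the final blue vertex past $(i+1)s$. Your proposal stops exactly at the anti-diagonal, with no overshoot and no transverse control, so this step fails. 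The suggested ``union bound over the windows needed to cover the front'' cannot substitute for it: a union bound limits the probability that some window misbehaves, whereas you need a guarantee that some window provably lands with both coordinates large, and each window's analysis invokes the coupling Lemma~\ref{lem:coupling} to whiten other vertices, so several windows cannot be tracked jointly without additional argument. (Separately, passing from one phase to the stated corollary still needs the routine assembly---Lemma~\ref{lem:increase_square_by_one} for the first five and the last phases plus a union bound over the $O(\ln^2 n)$ phases---which your write-up omits, though the paper also treats it as immediate.)
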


\begin{proof}[Proof of Lemma \ref{lem:grids_main}]
Let us fix an integer $d \ge 2$. Suppose we run the auxiliary process $(D_j, C_j, t_j)$ monitoring the trajectory of the $d$-window for $\ell$ steps. Let $w$ be the number of steps we spend in the all white state $C=(0,0,\ldots,0)$. We have that $t_\ell = t_0 + \ell - w$, since when we are in the all white state $t_{j+1} = t_j$, and $t_{j+1} = t_j +1$ otherwise. The  distance of $D_0$ from the origin is $2(is) -d +1$. The distance of $D_\ell$ from the origin is $2(is)-d+1 + \ell - 2w$, since when we are  in the all white state $D_{j+1}$ is one closer to the origin than $D_j$,  and otherwise $D_{j+1}$ is one further from the origin than $D_j$.

Applying Chernoff--Heoffding bounds for Markov chains (equation~\eqref{eqn:chern_Markov}), we have that 
$$
\Prob \Big( (1 - \delta)\mu_d \ell \le w \le (1 + \delta)\mu_d \ell \Big) \ge 1 - c \exp{(- c\delta^2 \ell)}
$$
for $0 \le \delta \le 1$, where $c$ is some positive constant depending only on the Markov chain (in particular, independent of $\ell$ and $\delta$). Set 
$$
\ell = \left(\frac{2}{1-2\mu_d} + \frac{\ln^4{n}}{\sqrt{n}}\right)s \ge s \quad \text{ and } \quad \delta = \frac{\ln^2{n}}{\sqrt{n}}. 
$$
(Note that $\mu_d \le 1/77$ implies that $\ell < 3s$, which is the number of steps taken. We will show that $\mu_d$ satisfies this property below and so we may assume that the process does \emph{not} stop prematurely.) Then, using that $s = \lfloor n / \ln^2{n} \rfloor$, we have that with probability $1-o(1/n)$
the distance of $D_\ell$ from the origin is at least
\begin{align*}
2is - d+1 & + \ell - 2(1+\delta) \mu_d \ell \\
& = 2is + O(1) + (1-2\mu_d) \left( 1 - \Theta \left( \frac{ \ln^2 n }{ \sqrt{n} } \right) \right) \ell \\
& = 2is + O(1) + (1-2\mu_d) \left( 1 - \Theta \left( \frac{ \ln^2 n }{ \sqrt{n} } \right) \right) \frac {2}{1-2\mu_d} \left( 1 + \Theta \left( \frac{ \ln^4 n }{ \sqrt{n} } \right) \right) s \\
& = 2(i+1) s + \Theta( \sqrt{n} \ln^2 n),
\end{align*}
and the number of rounds in this phase that passed in the original zero-forcing process is equal to
\begin{eqnarray*}
t_\ell - t_0 = \ell - w \le (1 - (1-\delta) \mu_d ) \ell &=& (1 - \mu_d) \left( 1 + \Theta \left( \frac {\ln^2 n}{\sqrt{n}} \right) \right) \ell \\
&=& (1 + o(1))\frac{1-\mu_d}{1-2\mu_d}2s.
\end{eqnarray*}

\medskip

It remains to show that the $d$-windows $D_j$ are travelling broadly North-East rather than North or East, in order to be sure of obtaining a principal square as opposed to a rectangle. To that end, let us define the discrepancy $\disc(D)$ of a $d$-window $D = (a-d+1,b),\ldots,(a,b-d+1)$ to be $a-b$. The discrepancy captures how far from the North-East diagonal the $d$-window is shifted. The discrepancy of $D_0$ is $0$. By definition, $\disc\left(D_{j+1}\right)$ differs from $\disc\left(D_{j}\right)$ by at most one if $D_j$ is not all white at time $t_j$, and differs from $\disc\left(D_{j}\right)$ by at most $d$ if  $D_j$ is all white at time $t_j$.

Define a sequence $(j_k)_{k \ge 0}$ where $j_0 = 0$ and for $k > 0$, $j_k$ is the least $j > j_{k-1}$ such that $D_j$ is all white at time $t_j$ (that is, $C_j = (0, 0, \ldots, 0)$). For any $j$ for which $C_j \neq (0,0,\ldots,0)$, the probability that $C_{j+1} = (0,0,\ldots,0)$ is at least $(1/16)^{d+1}$. Hence, the probability that we do not hit an all white state after $2 \cdot 16^{d+1}\ln{n}$ consecutive steps, is at most
$$
\left(1 - (1/16)^{d+1}\right)^{ 2 \cdot 16^{d+1}\ln{n}} \le \exp ( -2 \ln n) = 1/n^2. 
$$
Thus, since there are at most $\ell = o(n)$ terms in the sequence, we have $j_k - j_{k-1} \le 2 \cdot 16^{d+1}\ln{n}$ for all $k$ with probability $1- o(1/n)$. Since we aim for a statement that holds with probability $1-o(1/n)$, we may assume that this property is deterministically satisfied.

Consider the sequence $( \disc\left(D_{j_k}\right) )_{k \ge 0}$. Based on our assumption, for all $k \ge 1$
$$|\disc\left(D_{j_k}\right) -\disc\left(D_{j_{k-1}}\right)| 
\le d + t_k - t_{k-1} \le \left( 2 \cdot 16^{d+1} + o(1) \right)\ln{n} \le 3 \cdot 16^{d+1} \ln n.
$$
By symmetry of the auxiliary process, we know that given $\disc\left(D_{j_{k-1}}\right)$ and integer $a$ the probability that $\disc\left(D_{j_k}\right) = \disc\left(D_{j_{k-1}}\right) + a$ must be the same as the probability that  $\disc\left(D_{j_k}\right) = \disc\left(D_{j_{k-1}}\right) - a$. Thus the sequence $(\disc\left(D_{j_k}\right) )_{k \ge 0}$ is a martingale. 
We can apply the Hoeffding-Azuma inequality~\eqref{eq:h-a} with $b=\sqrt{\ell} \, \ln^2 n$ and $2 \cdot 16^{d+1} \ln^2 n \le j_k \le \ell$ to see that 
$$
\Prob\left(|\disc\left(D_{j_k}\right)| \ge b \right) \le 2\exp\left(\frac{-b^2 }{2 \cdot k \cdot 3 \cdot 16^{d+1} \ln^2 {n}}\right) = 2\exp\left(\frac{-\ell \, \ln^4 n }{O(\ell \ln^2 n)}\right) = o(1/n),
$$
and so 
$$\Prob\left(|\disc\left(D_{\ell}\right)| \le \sqrt{\ell}\ln^2{n} + 3 \cdot 16^{d+1} \ln n \right) \ge 1 - o(1/n),$$
where the additional term $3 \cdot 16^{d+1} \ln n$ needed to be added because the last term in the sequence $(j_k)_{k \ge 0}$ can be smaller than $\ell$ but, with the desired probability, not smaller than $\ell - 3 \cdot 16^{d+1} \ln n$.

\medskip

Putting all of this together, we conclude that with probability $1 - o(1/n)$ at time $t_\ell = t_0 + (1 + o(1))\frac{1-\mu_d}{1-2\mu_d} 2s$ there is a blue vertex at distance $2(i+1)s + \Theta \left(\sqrt{n} \, \ln^2{n}\right)$ from the origin with a discrepancy of less than $\sqrt{\ell} \ln^2{n} = O\left(\sqrt{n} \, \ln{n} \right) = o\left(\sqrt{n} \, \ln^2{n} \right)$. Using Lemma~\ref{lem:two_blue_nbrs_persists} for the last time, we get that this blue vertex is the top-right corner of a $(2is + 1 -d-\ell) \times (2is + 1-d-\ell)$ all blue sub-grid at time $t_\ell$. Since $i \ge 5$, we get that
$$
2is + 1 -d - \ell \ge (i+1)s +  4s + 1 - d - 3s > (i+1)s,
$$ 
and so this all blue sub-grid entirely contains the top-right quadrant of the $(i+1)s$-principal-square.

By symmetry, the same conclusion holds for the other three quadrants and so with probability $1 - o(1/n)$ at time $t_0 + (1 + o(1))\frac{1-\mu_d}{1-2\mu_d} 2s$ the $(i+1)s$-principal-square is entirely blue. The proof of the lemma is finished.
\end{proof}

The only thing remaining to finish the main theorem is to calculate the stationary distribution of the Markov chain and thus $\mu_d$. Based on Corollary~\ref{cor:main}, each value of $\mu_d$ implies that \aas, 
$$
 pt\left(G_{n \times n}\right) \le (1 + \eps_d + o(1)) \, n, \quad \text{ where } \quad \eps_d := \frac{1-\mu_d}{1-2\mu_d} - 1 = \frac {\mu_d}{1-2\mu_d}.
$$
When $d = 2$ or $d=3$ the transition matrix is small enough to be calculated by hand and one can check that $\mu_2 = 1/77$ and $\mu_3 = 1861/491117$, respectively. It gives us $\eps_2 = 1/75 \le 0.01334$ and $\eps_3 = 1861/487395 \le 0.003819$.
For $d \le 7$, one can use a computer to calculate the exact fractions  in the transition matrix and thus the exact values of $\mu_d$. This gives 
\begin{align*}
    \mu_4 = & \frac{11439524}{9092101243}  \\ 
    \mu_5 = & \frac{1133763610798567}{2542177028478096119} \\ 
    \mu_6 = & \frac{112666827183116235892325831063}{686127236264864409019398540749761}  \\ 
    \mu_7 = & \frac{536778086928248989283123883507309287148693034345565}{8663791645046173690408989931892492266198652103814670581}. 
\end{align*}
For larger values of $d$, we must move to numerical approximations. Rounding errors become a concern for $d > 14$ when the minimum entry in the transition matrix is close to the precision of the computer. Below we summarize these numerical values in the table. In particular, when $d = 14$ we obtain $\eps_{14} < 10^{-7}$ and the main theorem holds.

\begin{center}
\begin{tabular}{ |l|l|l| } 
 \hline
 $d$ & $\mu_d$ & $\eps_d$ \\
 \hline
2 & $0.012987012987012988$ & $0.013333333333333334$ \\
3 & $0.0037893210782766634$ & $0.0038182582915294574$ \\
4 & $0.0012581826460420552$ & $0.001261356680213082$ \\
5 & $0.0004459813766302923$ & $0.0004463795305453566$ \\
6 & $0.00016420690103551534$ & $0.00016426084656466706$ \\
7 & $6.1956486134 \cdot 10^{-5}$ & $6.1964164298 \cdot 10^{-5}$ \\ 
8 & $2.3776197997 \cdot 10^{-5}$ & $2.3777328666 \cdot 10^{-5}$ \\
 9 & $9.2381456535 \cdot 10^{-6}$ & $9.2383163433 \cdot 10^{-6}$ \\
10 & $3.6235531968 \cdot 10^{-6}$ & $3.6235794573 \cdot 10^{-6}$ \\
11 & $1.4319129399 \cdot10^{-6}$ & $1.4319170406 \cdot 10^{-6}$ \\
12 & $5.6925354755 \cdot 10^{-7}$ & $5.6925419565 \cdot 10^{-7}$ \\
13 & $2.2742611942 \cdot 10^{-7}$ & $2.2742622287 \cdot 10^{-7}$ \\
14 & $9.1236746477 \cdot 10^{-8}$ & $9.1236763126 \cdot 10^{-8}$\\
 \hline
\end{tabular}
\end{center}

\section{Acknowledgment} 

This research has been partially supported by NSERC and by the Ryerson University Faculty of Science Dean’s Research Fund.
The numerical results presented in this paper were obtained using the Julia language~\cite{Julia}. We would like to thank Bogumi\l{} Kami\'nski from SGH Warsaw School of Economics for helping us to implement it. The program is available on-line.\footnote{\texttt{https://math.ryerson.ca/\~{}pralat/}} 

\bibliographystyle{plain}
\bibliography{ref}

\end{document}